\newtheorem{theorem}{Theorem}[section]
\newtheorem{lemma}[theorem]{Lemma}
\newtheorem{corollary}[theorem]{Corollary}
\theoremstyle{definition}
\newtheorem{definition}[theorem]{Definition}
\theoremstyle{remark}
\newtheorem{remark}[theorem]{Remark}
\newcommand{\eps}{\varepsilon}
\newcommand{\N}{{\mathbb N}}
\newcommand{\F}{\mathcal{F}}
\newcommand{\ifff}{if and only if }
\newcommand{\Pe}{\mathcal{P}}
\newcommand{\interior}{\operatorname{int}}
\numberwithin{equation}{section}
\begin{document}
\title [The Schur $\ell_1$ theorem for filters]
{The Schur $\ell_1$ theorem for filters}

\author[A.\,Aviles, B.\,Cascales, V.\,Kadets and A.\,Leonov]
{Antonio Aviles Lopez, Bernardo Cascales Salinas, Vladimir Kadets
and Alexander Leonov}
\thanks{The work of the third author was supported by the Seneca Foundation,
Murcia. Grant no. 02122/IV2/05}
\begin{abstract}
    We study classes of filters $\F$ on $\N$ such that weak and
    strong $\F$-convergence of sequences in $\ell_1$ coincide. We
    study also analogue of $\ell_1$ weak sequential completeness
    theorem for filter convergence.
\end{abstract}

\address {Equipe de Logique Math\'ematique
UFR de Math\'ematiques (case 7012) Universit\'e Denis-Diderot
Paris 7 2 place Jussieu 75251 Paris Cedex 05 France}
\email{avileslo@um.es}
\address{Departamento de Matemáticas, Universidad de Murcia,
30100 Espinardo, Murcia, Spain} \email{beca@um.es}
\address{Department of Mechanics and Mathematics,
Kharkov National University,
 pl.~Svobody~4,  61077~Kharkov, Ukraine}
\email{vova1kadets@yahoo.com}
\address{Department of Mechanics and Mathematics,
Kharkov National University,
 pl.~Svobody~4,  61077~Kharkov, Ukraine}
\email{leonov\_family@mail.ru}

\date{January 12, 2007.}
\maketitle

\section{Preliminaries}

Every theorem of Classical Analysis, Functional Analysis or of the
Measure Theory that states a property of sequences leads to a
class of filters for which this theorem is valid. Sometimes such
class of filters is trivial (say, all filters or the filters with
countable base), but in several cases this approach leads to a new
class of filters, and the characterization of this class can be a
very non-trivial task. Among such non-trivial examples there are
Lebesgue filters (for which the Lebesgue dominated convergence
theorem is valid), Egorov filters which correspond to the Egorov
theorem on almost uniform convergence \cite{kadleon}, and those
filters $\F$ for which every weakly $\F$ convergent sequence has a
norm-bounded subsequence  \cite{gakad}.

One of the reasons to study such questions is that they bring a
new light to the classical results. Say, it is known, that the
dominated convergence theorem can be deduced from the Egorov
theorem. The question, whether the converse is true has no sense
in the classical context: if both the theorems are true, how one
can see that one of them is not deducible from the other one? But
if one looks at the correspondent classes of filters, the problem
makes sense and in fact there are Lebesgue filters which are not
Egorov ones (in particular the statistical convergence filter).

In this paper we study the Schur theorem on coincidence of weak
and strong convergence in $\ell_1$ in a general setting when the
ordinary convergence of sequences is substituted by a filter
convergence. We show that for some filters this theorem is valid
and for some is not and give necessary conditions and sufficient
conditions (close one to another) for its validity. After that we
consider the Schur theorem for ultrafilters. We also study a
related problem of weak sequential completeness for filter
convergence.

Recall that a {\it filter} $\F$ on a set $N$ is a not-empty
collection of subsets of $N$ satisfying the following axioms:
$\emptyset \notin \F$; if $A,B \in \F$ then $A \bigcap B \in \F$;
and for every $A \in \F$ if $B \supset A$ then $B \in \F$. All
over the paper if the contrary is not stated directly we consider
filters on a countable set $N$. Sometimes for simplicity we put
$N=\N$.

A sequence $(x_n)$, $n \in N$ in a topological space $X$ is said
to be $\F$-{\it convergent} to $x$ (and we write $x=\F\text{-}\lim
x_n$ or $x_n \to_\F x$) if for every neighborhood $U$ of $x$ the
set $\{n \in N: x_n \in U\}$ belongs to $\F$.

In particular if one takes as $\F$ the filter of those sets whose
complement is finite (the {\it Fr\'echet filter}), then
$\F$-convergence coincides with the ordinary one.

The natural ordering on the set of filters on $N$ is defined as
follows: $\F_1 \succ \F_2$ if $\F_1 \supset \F_2$. If $G$ is a
centered collection of subsets (i.e. all finite intersections of
the elements of $G$ are non-empty), then there is a filter
containing all the elements of $G$. The smallest filter,
containing all the elements of $G$ is called {\it the filter
generated by} $G$.

Let $\F$ be a filter. A collection of subsets $G \subset \F$
is called  {\it the base of} $\F$ if for every $A \in \F$
there is a $B \in G$ such that $B \subset A$.

A filter $\F$ on $N$ is said to be {\it free} if it dominates the
Fr\'echet filter. All the filters below are supposed to be free.
In particular every ordinary convergent sequence will be
automatically $\F$-convergent.

A maximal in the natural ordering filter is called an {\it
ultrafilter}. The Zorn lemma implies that every filter is
dominated by an ultrafilter. A filter $\F$ on $N$ is an
ultrafilter \ifff for every $A \subset N$ either $A$ or $N
\setminus A$ belongs to $\F$.

A subset of $N$ is called {\it stationary} with respect to a
filter $\F$ (or just $\F$-stationary) if it has nonempty
intersection with each member of the filter. Denote the collection
of all $\F$-stationary sets by $\F^*$. For an $I \in \F^*$ we call
the collection of sets $\{A\cap I:\ A\in\F\}$ {\it the trace of
$\F$ on $I$} (which is evidently a filter on $I$), and by $\F(I)$
we denote the filter on $N$ generated by the trace of $\F$ on $I$.
Clearly $\F(I)$ dominates $\F$. Any subset of $N$ is either a
member of $\F$ or the complement of a member of $\F$ or the set
and its complement are both $\F$-stationary sets. $\F^*$ is
precisely the union of all ultrafilters dominating $\F$. $\F^*$ is
a filter base \ifff it is equal to $\F$ and $\F$ is an
ultrafilter.

\begin{theorem}\label{stationary-thm}
Let $X$ be topological space, $x_n, x \in X$ and let $\mathcal{F}$
be a filter on $N$. Then the following conditions are equivalent

\begin{enumerate}
\item \label{pro1}$(x_n)$ is $\F$-convergent to $x$;
\item\label{pro2}$(x_n)$ is $\F(I)$-convergent to $x$ for every $I
\in \F^*$;
\item \label{pro3}$x$ is a cluster point of $(x_n)_{n
\in I}$ for every $I \in \F^*$.
\end{enumerate}
\end{theorem}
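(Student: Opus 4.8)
The plan is to prove the three implications cyclically, $(\ref{pro1}) \Rightarrow (\ref{pro2}) \Rightarrow (\ref{pro3}) \Rightarrow (\ref{pro1})$. Throughout I will read ``$x$ is a cluster point of $(x_n)_{n\in I}$'' in condition $(\ref{pro3})$ as the statement that for every neighborhood $U$ of $x$ the set $\{n \in I : x_n \in U\}$ is stationary with respect to the trace of $\F$ on $I$; since this set is contained in $I$, being stationary for the trace is the same as being $\F$-stationary, and I will use this reformulation constantly. The one ingredient that makes everything fit together is the elementary remark, already implicit in the preliminaries, that a set $S \subseteq N$ fails to belong to $\F$ \ifff its complement $N \setminus S$ is $\F$-stationary: if some $A \in \F$ were contained in $S$ then upward closure would force $S \in \F$, and conversely if $N\setminus S$ meets every member of $\F$ then no member of $\F$ sits inside $S$.

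For $(\ref{pro1}) \Rightarrow (\ref{pro2})$ I would simply invoke that $\F(I)$ dominates $\F$ for every $I \in \F^*$: if $\{n : x_n \in U\} \in \F$ for each neighborhood $U$, then the same set lies in the larger filter $\F(I)$, so $(x_n)$ is automatically $\F(I)$-convergent to $x$. For $(\ref{pro2}) \Rightarrow (\ref{pro3})$, fix $I \in \F^*$ and a neighborhood $U$. Since $x_n \to_{\F(I)} x$ and the sets $\{A \cap I : A \in \F\}$ form a base of $\F(I)$, there is $A \in \F$ with $A \cap I \subseteq \{n : x_n \in U\}$. The point is now to test $\F$-stationarity of $\{n\in I : x_n\in U\}$ against an arbitrary $B \in \F$: because $A \cap B \in \F$ and $I$ is $\F$-stationary, the intersection $A \cap B \cap I$ is nonempty, and it sits inside both $B$ and $\{n \in I : x_n \in U\}$. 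Hence that set meets every $B \in \F$, which is exactly the cluster-point condition.

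The implication $(\ref{pro3}) \Rightarrow (\ref{pro1})$ is where the real content lies, and I would argue by contraposition. If $(x_n)$ is not $\F$-convergent to $x$, choose a neighborhood $U$ with $S := \{n : x_n \in U\} \notin \F$. By the remark above $I := N \setminus S$ is $\F$-stationary, hence a legitimate choice in $(\ref{pro3})$; but on this $I$ one has $\{n \in I : x_n \in U\} = \emptyset$, which meets no member of $\F$ and is therefore not $\F$-stationary. Thus $x$ fails to be a cluster point of $(x_n)_{n \in I}$, contradicting $(\ref{pro3})$. I expect the genuine subtlety to be not any single computation but the correct bookkeeping of the three notions ``member of $\F$'', ``$\F$-stationary'', and ``stationary for the trace on $I$''; once the complementation remark pins down the interplay between the first two, each implication reduces to unwinding the relevant definition.
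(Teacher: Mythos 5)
Your proof is correct and follows essentially the same route as the paper: the first two implications are routine unwindings of definitions, and the substantive step $(\ref{pro3})\Rightarrow(\ref{pro1})$ is proved by exactly the paper's contrapositive argument, taking $I=\{n: x_n\notin U\}$ for a neighborhood $U$ witnessing non-convergence and observing that $I$ is stationary while no term indexed by $I$ lies in $U$. Your filter-relative reading of ``cluster point'' is slightly stronger than the classical one, but since your chain passes through it between $(\ref{pro2})$ and $(\ref{pro1})$ the argument is valid under either reading.
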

\begin{proof} Implications (\ref{pro1})$\Rightarrow$ (\ref{pro2})
and (\ref{pro2})$\Rightarrow$ (\ref{pro3}) are evident. Let us
prove that (\ref{pro3})$\Rightarrow$ (\ref{pro1}). Suppose $x_n$
do not $\F$-converge to $x$. Then there is  such a neighborhood
$U$ of $x$  that in each $A\in\F$ there is a $j \in A$ such that
$x_j \not\in U$. Consequently $I=\{j \in \N: x_j \not\in U\}$ is
stationary and $x$ is not a cluster point of $(x_n)_{n \in I}$.
\end{proof}

More about filters, ultrafilters and their applications one can
find in most of advanced General Topology textbooks, for example
in \cite{tod}.

For the standard Banach space terminology we refer to \cite{lts}.
All the spaces, functionals and operators (although this does not
matter) are assumed to be over the field of reals. Before we pass
to the main results let us recall some notations and geometric
properties of $\ell_1$. Denote by $e_n$ the $n$-th element of the
canonical basis of $\ell_1$ and by $e_n^*$ the $n$-th coordinate
functional on $\ell_1$. In this notations for every $x \in \ell_1$
we have
$$
x = \sum_{n \in \N} e_n^*(x)e_n.
$$
Recall that $e_n$ are separated from 0 by the functional $f(x) =
\sum_{n \in \N} e_n^*(x)$, i.e. 0 is not a weak cluster point of
$(e_n)$. The following lemma can be easily extracted from the
block-basis selection method (see \cite{lts}, volume 1). We give
the proof for completeness.

\begin{lemma} \label{lem01}
Let $y_n \in \ell_1$, $\inf_{n \in \N}\|y_n\|=\eps_0 > \eps > 0$
and let $\{m(n)\}$ be an increasing sequence of naturals. Denote
$z_i=\sum_{k \in (m(i),m(i+1)]}e_k^*(y_i)e_k$. If under these
notations $\sup_{n \in \N}\|y_n - z_n\| < \eps/2$ (i.e. $(y_n)$ is
a small perturbation of the block-basis $(z_n)$) then $(y_n)$ is
equivalent to the sequence $(\|y_n\|e_n)$ and consequently 0 is
not a weak cluster point of $(y_n)$.
\end{lemma}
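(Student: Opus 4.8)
The plan is to exploit that the blocks $(z_i)$ live on the pairwise disjoint coordinate windows $(m(i),m(i+1)]$, so that for arbitrary scalars $(a_i)$ one has the exact identity $\|\sum_i a_i z_i\| = \sum_i |a_i|\,\|z_i\|$ in $\ell_1$; in other words $(z_i)$ is already isometrically equivalent to $(\|z_i\|e_i)$. I would then treat $(y_n)$ as the small perturbation $y_n = z_n + w_n$ with $\|w_n\|<\eps/2$ and push this identity through the triangle inequality, using the lower norm bound $\|y_n\|\ge\eps_0>\eps$ to control the error terms. Recall also that on the model side $\|\sum_n a_n\,\|y_n\|\,e_n\| = \sum_n |a_n|\,\|y_n\|$, so establishing equivalence amounts to two-sided estimates of $\|\sum_n a_n y_n\|$ by $\sum_n |a_n|\,\|y_n\|$.

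For the upper estimate I would write $\|\sum_n a_n y_n\|\le \sum_n|a_n|\,\|z_n\| + \sum_n|a_n|\,\|w_n\|$, bound $\|z_n\|\le\|y_n\|+\eps/2$ and $\|w_n\|<\eps/2$, and collect the errors into $\eps\sum_n|a_n|$. Since $\|y_n\|\ge\eps_0$ gives $\sum_n|a_n|\le \eps_0^{-1}\sum_n|a_n|\,\|y_n\|$, this yields $\|\sum_n a_n y_n\|\le (1+\eps/\eps_0)\sum_n|a_n|\,\|y_n\| \le 2\sum_n|a_n|\,\|y_n\|$. The lower estimate is symmetric: $\|\sum_n a_n y_n\|\ge \sum_n|a_n|\,\|z_n\| - \sum_n|a_n|\,\|w_n\|$ together with $\|z_n\|\ge\|y_n\|-\eps/2$ gives $\|\sum_n a_n y_n\|\ge (1-\eps/\eps_0)\sum_n|a_n|\,\|y_n\|$. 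The gap $\eps_0>\eps$ is exactly what makes the lower constant $1-\eps/\eps_0$ strictly positive, so the two sequences are equivalent.

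For the final assertion I would avoid invoking abstract weak-weak continuity of the induced isomorphism and instead exhibit one separating functional explicitly. On each window $(m(i),m(i+1)]$ set $\theta_k=\operatorname{sign} e_k^*(y_i)$, choose $\theta_k$ arbitrarily off the union of the windows, and assemble $g=\sum_k\theta_k e_k^*\in(\ell_1)^*$ with $\|g\|\le1$. Because the windows are disjoint, $g$ is well defined and $g(z_i)=\sum_{k\in(m(i),m(i+1)]}|e_k^*(y_i)|=\|z_i\|$, whence $g(y_i)\ge\|z_i\|-\|w_i\|\ge\eps_0-\eps>0$ for every $i$. Thus the weak neighborhood $\{x:\ g(x)<\eps_0-\eps\}$ of $0$ contains no $y_i$, so $0$ is not a weak cluster point of $(y_n)$.

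The computations are routine; the only genuine point — really the heart of the lemma — is keeping the lower constant positive, which is precisely why the hypothesis separates $\eps_0$ from $\eps$ rather than merely assuming $\inf_n\|y_n\|>0$. I would double-check that the $\eps/2$ coming from $\|w_n\|$ and the $\eps/2$ coming from $\|z_n\|-\|y_n\|$ combine to at most $\eps\sum_n|a_n|$, and that the signs $\theta_k$ are read off the correct block; everything else is forced by disjointness of supports in $\ell_1$.
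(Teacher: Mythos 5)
Your proof is correct and follows essentially the same route as the paper's: both establish the two-sided estimate by comparing $\left\|\sum_n a_n y_n\right\|$ with the disjointly supported blocks $z_n$ (for which the $\ell_1$-norm is exactly additive) and absorb the perturbation using $\|y_n\|\ge\eps_0>\eps$ to keep the lower constant $1-\eps/\eps_0$ strictly positive; the only cosmetic difference is that the paper obtains the upper constant $1$ directly from the triangle inequality where you get $2$ by routing through the blocks. Your explicit signed functional $g$ for the weak-cluster-point assertion is a small self-contained addition, where the paper instead relies on the equivalence together with its earlier remark that the $e_n$ are separated from $0$ by $f(x)=\sum_{k\in\N} e_k^*(x)$.
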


\begin{proof}
  We must find $c_1, c_2>0$ such that for every collection of
  scalars $a_n$
  $$
  c_1\sum_{n \in \N}|a_n|\|y_n\| \le \left\|\sum_{n \in \N}a_n y_n\right\|
  \le c_2\sum_{n \in \N}|a_n|\|y_n\|.
  $$
  The upper estimate with $c_2=1$ follows immediately from the
  triangular inequality. The lower one holds with
  $c_1=1-\eps_0/\eps$
  $$
  \left\|\sum_{n \in \N}a_n y_n\right\| \ge \sum_{n \in \N}|a_n|\|z_n\| -
  \sum_{n \in \N}|a_n|\|y_n-z_n\| \ge
  $$
  $$ \sum_{n \in \N}|a_n|\|y_n\| - 2\sum_{n \in \N}|a_n|\|y_n-z_n\| \ge
  \left(1-\frac{\eps}{\eps_0}\right)\sum_{n \in \N}|a_n|\|y_n\|.
  $$
\end{proof}

\section{Simplified Schur property for filters}

There are several natural ways to generalize the Schur theorem for
filters instead of sequences. Let us start with the one leading to
a class of filters which we are able to characterize completely in
combinatorial terms.

\begin{definition} \label{def1}
A filter $\F$ on $\N$ is said to be a {\it simple Schur filter}
(or is said to have the {\it simplified Schur property}) if for
every coordinate-wise convergent to 0 sequence $(x_n) \subset
\ell_1$ if $(x_n)$ weakly $\F$-converges to 0, then
$\F\text{-}\lim \|x_n\| = 0$.
\end{definition}

For an infinite set $I \subset \N$ let us call a {\it blocking} of
$I$ a disjoint partition $D=\{D_k\}_{k\in\N}$ of $I$ into
non-empty finite subsets.

\begin{definition} \label{def2}
A filter $\F$ on $\N$ is said to be {\it block-respecting} if for
every $I \in \F^*$ and for every blocking $D$ of $I$ there is a $J
\in \F^*$, $J\subset I$ such that $|J \cap D_k|=1$ for all $k$,
where the ``modulus'' of a set stands for the number of elements
in the set.
\end{definition}

\begin{remark}If in the definition above one writes
\begin{equation} \label{eq-block}
\forall_{k\in\N}  |J \cap D_k| \le 1
\end{equation}
instead of $|J \cap D_k|=1$, one will obtain an equivalent
definition.
\end{remark}
\begin{remark} \label{rem1.1}
If $\F$ is block-respecting, then $\F(J)$ for every $J \in \F^*$
is also block-respecting.
\end{remark}
\begin{lemma}\label{simple-lemma}
Let $\F$ be a block-respecting filter and let $(x_n) \subset
\ell_1$ form a coordinate-wise convergent to 0 sequence, which
does not $\F$-converge to 0 in norm. Then there is a $J \in \F^*,$
such that the sequence $(x_n), n \in J$ is equivalent to $(a_i
e_i)$, where $e_i$ form the canonical basis of $\ell_1$, $a_i \ge
1$.
\end{lemma}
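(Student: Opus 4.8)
My plan is to reduce to a stationary set on which the norms stay bounded below, then to carve out of it a stationary subset along which $(x_n)$ is a small $\ell_1$-perturbation of a block basis, and finally to quote Lemma~\ref{lem01}. Since $(x_n)$ does not $\F$-converge to $0$ in norm, there is an $\eps_0>0$ with $\{n:\|x_n\|<\eps_0\}\notin\F$, and as recorded in the preliminaries this forces $I_0:=\{n:\|x_n\|\ge\eps_0\}$ to be $\F$-stationary. Fix $\eps$ with $0<\eps<\eps_0$, write $P_{(a,b]}x:=\sum_{a<k\le b}e_k^*(x)e_k$, and attach to each $n$ two coordinates: $\ell(n)$, the largest $M$ with $\|P_{(0,M]}x_n\|<\eps/4$, and $h(n)$, the least $M$ with $\|P_{(M,\infty)}x_n\|<\eps/4$ (here $\ell(n)<h(n)$, since otherwise $\|x_n\|<\eps/2<\eps_0$). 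Coordinate-wise convergence makes each set $\{n:\ell(n)\le M\}$ finite, so $\ell(n)\to\infty$; hence I can fix a strictly increasing $g\colon\N\to\N$ with $g(j)>j$ such that $j'\ge g(j)$ implies $\ell(j')\ge h(j)$.

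The heart of the proof is to produce a stationary $J\subseteq I_0$ that is \emph{$g$-sparse}, i.e.\ consecutive elements $j<j'$ of $J$ satisfy $j'\ge g(j)$; geometrically this separates the coordinate windows $(\ell(n),h(n)]$ of successive selected vectors. Put $\tau_0=0$, $\tau_{i+1}=g(\tau_i)$ and let $E_i=[\tau_i,\tau_{i+1})$ be the resulting value-intervals. One application of the block-respecting property to a single blocking cannot prevent a transversal from picking two points on opposite sides of a block boundary, so the device I would use is to invoke it \emph{twice}, on two shifted blockings. First apply Definition~\ref{def2} to the blocking whose blocks are $A_i:=I_0\cap(E_{2i}\cup E_{2i+1})$, obtaining a stationary transversal $J_A\subseteq I_0$ with $|J_A\cap A_i|\le1$ (using the equivalent form \eqref{eq-block}). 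By Remark~\ref{rem1.1} the filter $\F(J_A)$ is again block-respecting, so applying the property to the shifted blocking with blocks $B_i:=J_A\cap(E_{2i+1}\cup E_{2i+2})$ (together with $J_A\cap E_0$) yields a stationary, hence $\F$-stationary, set $J\subseteq J_A$ with $|J\cap B_i|\le1$. Now two elements of $J$ lying in the same $E_i$ or in adjacent $E_i,E_{i+1}$ would, according to the parity of $i$, lie in a common $A$-block or a common $B$-block, which is excluded; thus consecutive elements of $J$ sit in intervals whose indices differ by at least two, and the relation $\tau_{i+1}=g(\tau_i)$ then gives exactly the desired $g$-sparseness. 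This double, shifted use of the block-respecting property is the step I expect to be the main obstacle.

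It remains to read off the block basis. Enumerate $J=\{j_1<j_2<\cdots\}$; $g$-sparseness gives $h(j_k)\le\ell(j_{k+1})$, so the intervals $[h(j_k),\ell(j_{k+1})]$ are non-empty and I may choose $m_0=0<m_1<m_2<\cdots$ with $m_k\in[h(j_k),\ell(j_{k+1})]$. Setting $z_k:=P_{(m_{k-1},m_k]}x_{j_k}$, the inequalities $m_{k-1}\le\ell(j_k)$ and $m_k\ge h(j_k)$ control the low and high tails and yield $\|x_{j_k}-z_k\|<\eps/2$; since $\|x_{j_k}\|\ge\eps_0>\eps$, Lemma~\ref{lem01} applies to the pair $(x_{j_k})$, $(z_k)$ and shows that $(x_{j_k})_k$ is equivalent to $(\|x_{j_k}\|e_k)_k$. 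Finally, scaling all vectors by the common factor $\eps_0^{-1}$ preserves equivalence, so $(x_{j_k})_k$ is equivalent to $(a_k e_k)_k$ with $a_k=\|x_{j_k}\|/\eps_0\ge1$, which is the assertion for this $J$.
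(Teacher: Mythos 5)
Your proof is correct and follows the same overall strategy as the paper's: pass to a stationary set where the norms are bounded below, use the block-respecting property to extract a stationary transversal sparse enough that $(x_n)$ along it becomes a small perturbation of a block basis, and conclude with Lemma~\ref{lem01}. The one step where you genuinely diverge is the device for preventing two selected indices from straddling a block boundary: you apply the block-respecting property twice, to the two shifted blockings $\{A_i\}$ and $\{B_i\}$, invoking Remark~\ref{rem1.1} to keep $\F(J_A)$ block-respecting; the paper instead applies it once, to the blocking $D_i=(n_{i-1},n_i]\cap I$, and then splits the resulting transversal $J=\{j_1,j_2,\ldots\}$ into its odd- and even-indexed halves, observing that one of the two halves must remain $\F$-stationary (if both complements lay in $\F$, so would their intersection, i.e.\ the complement of $J$). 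Both devices buy exactly the same gap of one block between consecutive selected indices; yours costs a second appeal to Definition~\ref{def2}, the paper's costs the small stationarity observation. Everything else --- your bookkeeping via $\ell(n)$, $h(n)$ and the sparseness function $g$ versus the paper's summable $\delta_k$ and the sequences $(m(n))$, $(n_i)$, and your final rescaling by $\eps_0^{-1}$ to get $a_i\ge 1$ --- is a cosmetic variation, modulo the routine remark that empty blocks in your blockings should be discarded before applying Definition~\ref{def2}.
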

\begin{proof}
Due to the Theorem \ref{stationary-thm} there is an $I \in \F^*$
such that $\inf_{n \in I}\|x_n\| > \eps > 0$. Fix a decreasing
sequence of $\delta_k > 0$, $\sum_{k \in \N}\delta_k \le \eps/8$.
Using the definition of $\ell_1$ let us select an increasing
sequence of naturals $(m(n))$ and  such that for every $n \in \N$
\begin{equation} \label{eq-block1}
\sum_{k \ge m(n)}|e_k^*(x_n)| < \delta_n
\end{equation}
and using the coordinate-wise convergence of $x_n$ to 0 select an
increasing sequence of integers $(n_i)$ such that $n_0=0$, $D_i :=
(n_{i-1}, n_i] \cap I \neq \emptyset$ and for every $i \in \N$ and
$j \ge n_{i+1}$
\begin{equation} \label{eq-block2}
\sum_{k \le m(n_{i})} |e_k^*(x_j)| < \delta_i.
\end{equation}
Taking in account the respect which $\F$ has to the blocks $D_i$
let us select a $J =\{j_1, j_2, \ldots\}\in \F^*,$ $J \subset I$
such that $j_i \in (n_{i-1}, n_i]$ for all $i\in\N$. Since $J\in
\F^*,$ either $J_1 =\{j_1, j_3, j_5 \ldots\}$ or $J_2 =\{j_2, j_4,
j_6 \ldots\}$ is an $\F$-stationary set as well. Let, say, $J_2\in
\F^*$. Let us show that in fact vectors $y_i=x_{j_{2i}}$  are
small perturbations of the block-basis $z_i = \sum_{k \in
(m(n_{2(i-1)}),m(n_{2i})]}e_k^*(y_i)e_k$, which due to the Lemma
\ref{lem01} completes the proof. So:

$$
\|y_i - z_i \| = \sum_{k \le m(n_{2i-2})}|e_k^*(x_{j_{2i}})|
+ \sum_{k > m(n_{2i})}|e_k^*(x_{j_{2i}})|.
$$
Taking into account inequalities (\ref{eq-block1}),
(\ref{eq-block2}) and that $j_{2i} \in (n_{2i-1}, n_{2i}]$, we get
$\|y_i - z_i \| \le 2 \delta_{j_{2i}}$ which implies the condition
of Lemma \ref{lem01}.

\end{proof}
\begin{theorem}\label{simple-thm}
A filter $\F$ on $\N$ has the simplified Schur property \ifff
$\F$ is block-respecting.
\end{theorem}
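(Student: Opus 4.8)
The plan is to prove the two implications separately; the easy direction follows from the lemmas already established, while the converse requires a genuine construction that is the heart of the theorem.

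For ``block-respecting $\Rightarrow$ simplified Schur property'' I would argue by contradiction. Let $(x_n)$ be coordinate-wise convergent to $0$ and weakly $\F$-convergent to $0$, and suppose that $\|x_n\|$ does not $\F$-converge to $0$. Then Lemma \ref{simple-lemma} applies and produces a $J\in\F^*$ on which $(x_n)_{n\in J}$ is equivalent to a scaled canonical basis, so that $0$ is not a weak cluster point of $(x_n)_{n\in J}$. On the other hand, applying Theorem \ref{stationary-thm} in the weak topology of $\ell_1$, weak $\F$-convergence to $0$ forces $0$ to be a weak cluster point of $(x_n)_{n\in I}$ for every $I\in\F^*$, in particular for $I=J$. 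This contradiction yields $\F\text{-}\lim\|x_n\|=0$.

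For the converse I would prove the contrapositive. Assume $\F$ is not block-respecting and fix $I\in\F^*$ together with a blocking $D=\{D_k\}$ of $I$ admitting no $\F$-stationary $J\subset I$ with $|J\cap D_k|\le 1$ for all $k$ (the equivalent form (\ref{eq-block})). The first observation is that a sequence weakly $\F$-converges to $0$ \ifff $f(x_n)\to_\F 0$ for every $f\in(\ell_1)^*=\ell_\infty$; and since $A\in\F$ \ifff $N\setminus A\notin\F^*$, this is the same as requiring that for every $f$ and every $\eps>0$ the set $S_{f,\eps}=\{n:\ |f(x_n)|\ge\eps\}$ be non-stationary. I would then build $(x_n)$ block by block: set $x_n=0$ for $n\notin I$, and to the $m_k$ elements of $D_k$ assign norm-one vectors supported on a fresh coordinate block $C_k$, the $C_k$ being pairwise disjoint and pushed to infinity (which gives coordinate-wise convergence to $0$, as each fixed coordinate is nonzero for only finitely many $n$). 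Crucially, inside a block all $m_k$ vectors share the \emph{same} support $C_k$ but use mutually orthogonal $\pm1$ sign patterns, i.e. rows of a Hadamard matrix of size $|C_k|\ge m_k$.

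The purpose of the orthogonal patterns is the estimate $\sum_{n\in D_k}|f(x_n)|^2\le\|f\|_\infty^2$ for every $f\in\ell_\infty$, an immediate consequence of Bessel's inequality applied to the normalized Hadamard rows. Hence $|f(x_n)|\ge\eps$ holds for at most $\|f\|_\infty^2/\eps^2$ indices $n$ per block, so $S_{f,\eps}$ meets every block in a bounded number $C$ of points. The final combinatorial step is that such a set splits into at most $C$ subsets, each meeting every block in at most one point; by the failure of the block-respecting property each of these is non-stationary, hence its complement lies in $\F$, and intersecting these finitely many complements shows $S_{f,\eps}^{\,c}\in\F$, i.e. $S_{f,\eps}$ is non-stationary. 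This gives weak $\F$-convergence to $0$, while $\|x_n\|=1$ on the stationary set $I$ shows the norm does not $\F$-converge to $0$, so $\F$ fails the simplified Schur property.

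I expect the main obstacle to be exactly this converse construction. Because $\ell_1$ has no normalized weakly null sequences, the weak $\F$-limit can only be manufactured through the filter, and the whole difficulty is to force every fixed functional to be large on only boundedly many points of each block and then convert ``boundedly many per block'' into non-stationarity via the no-transversal hypothesis. The delicate balance is making the supports shared \emph{within} a block (to defeat any single functional through orthogonality) yet disjoint \emph{across} blocks (to recover coordinate-wise convergence); by comparison the easy implication and the scalar reformulation of weak $\F$-convergence are routine.
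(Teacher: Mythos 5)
Your proof is correct, and its overall skeleton matches the paper's: the ``if'' direction via Lemma \ref{simple-lemma} combined with Theorem \ref{stationary-thm} in the weak topology, and the converse via the contrapositive, building a block-supported sequence on which every functional is large at only boundedly many points of each $D_k$, then converting ``boundedly many per block'' into non-stationarity by splitting such a set into finitely many sets satisfying (\ref{eq-block}) and intersecting the complements --- this last reduction is exactly the paper's passage to condition (\ref{eq-block!}). The one genuine difference is the analytic heart of the construction. The paper invokes Dvoretzky's almost Euclidean section theorem to get, for each block $D_i$, vectors spanning a $2$-Euclidean system in a fresh coordinate range, and then tests the sum $y_k=\sum_{n\in J\cap D_k}x_n$ against $f$ to obtain $\eps d_k\le\sqrt{2d_k}$, hence $d_k\le 2/\eps^2$. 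You instead take normalized rows of a Hadamard matrix on a shared coordinate block $C_k$ and apply Bessel's inequality on the dual side to get $\sum_{n\in D_k}|f(x_n)|^2\le\|f\|_\infty^2$ directly, hence $d_k\le\|f\|_\infty^2/\eps^2$. The two estimates are interchangeable for the purposes of the proof; your version is more elementary and fully explicit (no appeal to Dvoretzky, and exact norm-one vectors rather than norms in $[1,\sqrt2]$), at the mild cost of needing $|C_k|$ to be a Hadamard size (a power of $2$ suffices), which is harmless. Everything else --- coordinate-wise convergence from disjoint supports pushed to infinity, $\|x_n\|=1$ on the stationary set $I$, and the scalar reformulation of weak $\F$-convergence via non-stationarity of $S_{f,\eps}$ --- is as in the paper.
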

\begin{proof}
The ``if'' part of the theorem follows immediately from
Lemma~\ref{simple-lemma}. So let us turn to the ``only if'' part.
Assume that $\F$ is not block-respecting, i.e. there is an $I \in
\F^*$ and there is a blocking $D$ of $I$ such that every $J
\subset I$ satisfying (\ref{eq-block}) is not $\F$-stationary. In
other words $\N \setminus J \in \F$ for every $J \subset I$
satisfying (\ref{eq-block}). Since the finite intersection of the
filter elements again belongs to $\F$, we can reformulate the fact
that $\F$ is not block-respecting as follows: there is an $I \in
\F^*$ and such a blocking $D=\{D_k\}_{k\in\N}$ of $I$ that $\N
\setminus J \in \F$ for every $J \subset I$ satisfying
\begin{equation} \label{eq-block!}
\sup_{k\in\N}|J \cap D_k| < \infty.
\end{equation}
Now, using Dvoretzky's almost Euclidean section theorem let us
select an increasing sequence of integers $0=m_0<m_1<m_2< \ldots$
and a sequence of vectors $x_n \in \ell_1$ such that $x_n=0$ when
$n \not\in I$; $x_n \in {\rm Lin}\{e_k\}_{k \in (m_{i-1}, m_i]}$
when $n \in D_i$ and for every collection of scalars $a_n$
\begin{equation} \label{dvor}
\left( \sum_{n\in D_i}|a_n|^2\right)^{1/2}  \le \left\|\sum_{n \in
D_i}a_n x_n\right\|  \le \left( 2 \sum_{n\in D_i}|a_n|^2
\right)^{1/2}.
\end{equation}
This sequence converges coordinate-wise to 0 and is not
$\F$-convergent to 0 in norm, because $\|x_n\| \ge 1$ for every $n
\in I$. Let us prove $x_n$'s weak $\F$-convergence to 0, which
will show that $\F$ does not have the simplified Schur property.
Well, take an $f \in \ell_1^*$ with  $\|f\| = 1$, fix an $\eps >
0$ and consider the set of indexes $A=\{n: f(x_n) < \eps \}$. We
must prove that $A \in \F$. Since the complement of $A$ lies in
$I$, it is sufficient to show that $J=\N \setminus A = \{n: f(x_n)
\ge \eps \}$ satisfies (\ref {eq-block!}). In other words we must
estimate $d_k=|J \cap D_k|$ from above uniformly in $k$. Let us do
this. Consider $y_k= \sum_{n \in J \cap D_k}x_n$. Then $f(y_k)\ge
\eps d_k$ and due to (\ref{dvor}) $\|y_k\|^2 \le 2 d_k$. Hence
$$
\eps d_k \le f(y_k) \le \sqrt{2 d_k}
$$
and $d_k \le 2/\eps^2$.
\end{proof}

\begin{remark}
One can see that in the ``only if" part of the Theorem
\ref{simple-thm} proof the sequence $(x_n)$ is bounded by
$\sqrt{2}$. So, if one restricts Definition \ref{def1} to the
bounded sequences, the class of filters does not change. In fact
this is a little bit surprising because a weakly $\F$-convergent
sequence can converge to infinity in norm \cite{gakad}. If one
analyzes the characterization \cite{gakad} of those ``good"
filters $\F$ for which every weakly $\F$-convergent sequence has a
norm-bounded subsequence, one can see that every simple Schur
filter is ``good". The only obstacle to see this without
refereeing to \cite{gakad} is the coordinate-wise convergence
which appears in Definition \ref{def1}. To see that this obstacle
is not fatal one really needs to go into the proofs of
\cite{gakad}.
\end{remark}

\section{Schur filters}
Let us pass now to the study of the most natural Schur theorem
generalization, which is easier to formulate, but is much more
complicated to characterize in combinatorial terms.
\begin{definition} \label{def2.1}
A filter $\F$ on $N$ is said to be a {\it Schur filter} (or is
said to have the {\it Schur property}) if for every weakly
$\F$-convergent to 0 sequence $(x_n) \subset \ell_1$, $n \in N$
the $\F\text{-}\lim \|x_n\|$ equals 0.
\end{definition}

Evidently, every Schur filter has the simplified Schur property.
By now we don't know if the converse holds true as well.

To simplify the exposition we mostly consider $N=\N$, but the
general case cannot differ from this particular one.

\begin{definition} \label{def2.2}
$\F$ is said to be a {\it diagonal filter} if for every decreasing
sequence $(A_n) \subset \F$ of the filter elements and for every
$I \in \F^*$ there is a $J \in \F^*$, $J \subset I$ such that $|J
\setminus A_n| < \infty$ for all $n \in \N$.
\end{definition}

\begin{lemma}\label{diag-coord}
If a filter $\F$ on $\N$  is diagonal then for every $I \in \F^*$
and for every coordinate-wise $\F$-convergent to 0 sequence $(x_n)
\subset \ell_1$ there is a $J \in \F^*$, $J \subset I$ such that
$x_n$ coordinate-wise converge to 0 along $J$.
\end{lemma}
\begin{proof}
Fix a decreasing sequence of subsets $U_n$, forming a base of
neighborhoods of 0 in the topology of coordinate-wise convergence.
Define $A_n = \{k \in \N: x_k \in U_n\}$. Since $\F$ is diagonal
there is a $J \in \F^*$, $J \subset I$ such that $|J \setminus
A_n| < \infty$ for all $n \in \N$. This is the $J$ we desire.
\end{proof}
\begin{remark} \label{remark}
As one can see from the proof the only property of the
coordinate-wise convergence topology we used is the countable base
of 0 neighborhoods existence. Also one can easily prove the
inverse to the Lemma \ref{diag-coord} result: if $\F$ is not
diagonal, then there is a $I \in \F^*$ and a coordinate-wise
$\F$-convergent to 0 sequence $(x_n) \subset \ell_1$ such that for
every $J \in \F^*$, $J \subset I$ the sequence $(x_n)$ does not
converge coordinate-wise to 0 along $J$.

Let us demonstrate this inverse theorem. By the negation of the
diagonality definition  a decreasing sequence of $A_n \in \F$ and
an $I \in \F^*$ exist such that if  $S \subset I$ satisfies the
condition $|S \setminus A_n| < \infty$ for all $n \in \N$ then $\N
\setminus S \in \F$. Without loss of generality one may assume
that all the $D_n :=A_n \setminus A_{n+1}$ are infinite and
$\bigcup_n D_n = I$. Then every $J \in \F^*$, $J \subset I$ must
satisfy condition
\begin{equation} \label{eq.stand}
\left|\left\{n \in \N:\left|J\cap D_n \right| = \infty
\right\}\right|= \infty.
\end{equation}
For every $n \in I$ denote by $f(n)$ such index that $n \in
D_{f(n)}$. Consider the following sequence $(x_n)$:  for $n \in \N
\setminus I$ put $x_n = 0$, and for $n \in  I$ put $x_n = e_n +
e_{f(n)}$. This sequence is the one we need.
\end{remark}
\begin{theorem}\label{Schur-thm}
If a filter $\F$ on $\N$  is diagonal and is block-respecting,
then $\F$ has the Schur property.
\end{theorem}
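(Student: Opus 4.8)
The plan is to reduce the full Schur property to the simplified Schur property of Theorem~\ref{simple-thm}, using diagonality to bridge the gap between \emph{coordinate-wise $\F$-convergence} and genuine coordinate-wise convergence, and then block-respecting to control the norms on a suitable stationary set.

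First I would record that weak $\F$-convergence to $0$ implies coordinate-wise $\F$-convergence to $0$: applying the bounded functionals $e_k^*$ to a weakly $\F$-convergent sequence $(x_n)$ gives $e_k^*(x_n)\to_\F 0$ for each $k$. Arguing by contradiction, suppose $\F\text{-}\lim\|x_n\|\neq 0$. Then for some $\eps>0$ the set $\{n:\|x_n\|<\eps\}$ is not in $\F$, so that $I:=\{n:\|x_n\|\ge\eps\}$ is $\F$-stationary.

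Next I would invoke diagonality via Lemma~\ref{diag-coord}: since $(x_n)$ converges coordinate-wise $\F$ to $0$ and $I\in\F^*$, there is a $J\in\F^*$, $J\subset I$, along which $(x_n)$ converges coordinate-wise to $0$ in the ordinary sense. Define $\tilde x_n:=x_n$ for $n\in J$ and $\tilde x_n:=0$ otherwise. Then $(\tilde x_n)$ converges coordinate-wise to $0$ as an ordinary sequence on all of $\N$, because for each fixed $k$ only finitely many $n\in J$ have $|e_k^*(x_n)|$ large while off $J$ the coordinate vanishes. Moreover $(\tilde x_n)$ still weakly $\F(J)$-converges to $0$: as $\F(J)\supset\F$ and $J\in\F(J)$, each set $\{n:|f(\tilde x_n)|<\delta\}$ contains $\{n:|f(x_n)|<\delta\}\cap J$, which lies in $\F(J)$.

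Finally I would apply the simplified Schur property to the finer filter $\F(J)$. By Remark~\ref{rem1.1} the filter $\F(J)$ is again block-respecting, hence by Theorem~\ref{simple-thm} it enjoys the simplified Schur property; applied to the coordinate-wise null, weakly $\F(J)$-null sequence $(\tilde x_n)$ this yields $\F(J)\text{-}\lim\|\tilde x_n\|=0$. But $\|\tilde x_n\|=\|x_n\|\ge\eps$ for every $n\in J$, so $\{n:\|\tilde x_n\|<\eps\}=\N\setminus J$, which cannot belong to $\F(J)$ since $J\in\F(J)$ --- a contradiction proving $\F\text{-}\lim\|x_n\|=0$. The step I expect to be most delicate is the passage to $\F(J)$: one must check that truncating outside $J$ preserves weak $\F(J)$-convergence, that the norm lower bound survives on $J\in\F(J)$, and that block-respecting is inherited by $\F(J)$ so Theorem~\ref{simple-thm} is available. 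Conceptually, diagonality and block-respecting play complementary roles --- the former turns filter coordinate-wise convergence into true coordinate-wise convergence on a stationary set, after which the latter, through simplified Schur for $\F(J)$, governs the norms.
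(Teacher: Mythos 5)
Your proof is correct and follows essentially the same route as the paper: pass to a stationary set $I$ where the norms are bounded below, use diagonality (Lemma~\ref{diag-coord}) to extract $J\subset I$ with genuine coordinate-wise convergence, and then apply block-respecting via Remark~\ref{rem1.1} and Theorem~\ref{simple-thm} to $\F(J)$ for a contradiction. Your explicit truncation $\tilde x_n$ and the verification that weak $\F(J)$-convergence survives are just the details the paper leaves implicit in its final sentence.
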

\begin{proof}
Let $(x_n) \subset \ell_1$ be weakly $\F$-convergent to 0. Arguing
``ad absurdum''  suppose that there is an $I \in \F^*$ such that
\begin{equation}\label{separated}
\inf_{n \in I}\|x_n\| > \eps > 0.
\end{equation}
Due to Lemma \ref{diag-coord} there is a $J \in \F^*$, $J \subset
I$ such that $x_n$ coordinate-wise converge to 0 along $J$.  Since
$\F(J)$ is block-respecting (Remark \ref{rem1.1}), the condition
(\ref{separated}) contradicts Theorem \ref{simple-thm}.
\end{proof}

It was shown in Theorem \ref{simple-thm} that the block-respect of
$\F$ is a necessary condition in order to be a Schur filter. Our
next goal is to show that the diagonality of $\F$ is not a
necessary condition. To do this define a special filter on $N$.
Let $D= \{D_n\}_{n \in \N}$ be a disjoint partition of $N$ into
infinite subsets. For every sequence $C=\{C_n\}_{n \in \N}$ of
finite subsets $C_n \subset D_n$ and every $m \in \N$ introduce
the set $B_{m,C}=\bigcup_{n=m}^\infty (D_n \setminus C_n)$. The
sets $B_{m,C}$ form a filter base. Denote the corresponding filter
by $\F_D$. One can easily see that $\F_D$ is an example of not
diagonal block-respecting filter. In fact this filter ``almost"
appeared in Remark \ref{remark}. To make the picture clearer, we
may represent $N$ as an infinite matrix $\N \times \N$, with $D_n
= \{(k,n): k \in \N\}$ being its columns.
\begin{definition} \label{defselfrepr}
A filter $\F$ on $N$ is said to be {\it self-reproducing} if for
every $I \in \F^*$ there is a $J \in \F^*$, $J\subset I$ such that
the structure of the trace of $\F$ on $J$ is the same as of the
original filter $\F$, i.e. there is a bijection $s: N \to J$, that
maps $\F$ into its trace on $J$: $A \in \F \iff s(A) \in \F(J)$.
\end{definition}
\begin{theorem} \label{FD}
$\F_{D}$ is a Schur filter, i.e. diagonality is not a necessary
condition for the filter's Schur property.
\end{theorem}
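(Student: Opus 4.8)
The plan is to argue by contradiction, mimicking the endgame of Theorem \ref{Schur-thm} but replacing the appeal to diagonality (which $\F_D$ lacks) by a direct construction fuelled by a quantitative consequence of weak nullity. Represent $N=\N\times\N$ with the columns $D_j=\{(k,j):k\in\N\}$ as in the text, and recall that a set is $\F_D$-stationary exactly when it meets infinitely many columns in an infinite set. Suppose $(x_n)$ is weakly $\F_D$-null but $\|x_n\|$ does not $\F_D$-converge to $0$. By Theorem \ref{stationary-thm} there are $\eps>0$ and a stationary $I$ with $\|x_n\|>\eps$ on $I$; let $P=\{j:I\cap D_j\text{ is infinite}\}$, an infinite set of columns. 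The goal is to produce a stationary $J\subset I$ along which $(x_n)$ converges coordinate-wise to $0$ in the ordinary sense; since $\F_D(J)$ is block-respecting (Remark \ref{rem1.1}) and hence has the simplified Schur property (Theorem \ref{simple-thm}), while $\inf_{n\in J}\|x_n\|\ge\eps$, feeding such a $J$ into Lemma \ref{simple-lemma} yields a further stationary set on which $(x_n)$ is $\ell_1$-like, so $0$ is not a weak cluster point there, contradicting Theorem \ref{stationary-thm}.

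The main new ingredient is the following quantitative reading of weak $\F_D$-nullity, which I would prove first. For a fixed $L$ and $y\in\ell_1$ write $\sum_{\ell\le L}|e_\ell^*(y)|=\max_{\varepsilon}\sum_{\ell\le L}\varepsilon_\ell e_\ell^*(y)$ over sign vectors $\varepsilon\in\{-1,1\}^{L}$; each functional $f_\varepsilon=\sum_{\ell\le L}\varepsilon_\ell e_\ell^*$ has norm $1$, so $f_\varepsilon(x_n)\to_{\F_D}0$. Reading $\F_D$-convergence column by column this says $\limsup_{k}|f_\varepsilon(x_{(k,j)})|\to 0$ as $j\to\infty$; taking the (finite) maximum over $\varepsilon$ gives the key estimate that for every fixed $L$, $\limsup_{k}\sum_{\ell\le L}|e_\ell^*(x_{(k,j)})|\to 0$ as the column index $j\to\infty$. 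In words: in high columns the mass carried on any fixed initial block of coordinates is, up to finitely many $k$, uniformly negligible.

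With this in hand I would build $J$ column by column. Choose columns $c_1<c_2<\cdots$ inside $P$ so rapidly increasing that, by the key estimate, the set of $k\in D_{c_i}$ with $\sum_{\ell\le i}|e_\ell^*(x_{(k,c_i)})|<2^{-i}$ is cofinite in $D_{c_i}$; intersecting with the infinite set $I\cap D_{c_i}$ leaves an infinite set, inside which I would diagonalise (the trace of $\F_D$ on a column is the Fr\'echet filter, hence countably generated) to select an infinite $E_i$ on which the finitely many coordinates up to $i$ are actually driven to $0$. Put $J=\bigcup_i E_i$. Then $J\subset I$ is stationary and $\|x_n\|>\eps$ on $J$, and for a fixed coordinate $\ell$ every $n\in E_i$ with $i\ge\ell$ satisfies $|e_\ell^*(x_n)|<2^{-i}\to 0$, so along $J$ only the contributions coming from the finitely many early columns $c_1,\dots,c_{\ell-1}$ remain to be controlled.

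The hard part is exactly those finitely many early columns: a column may be \emph{stubborn}, carrying some fixed coordinate $\ell$ with $\liminf_k|e_\ell^*(x_{(k,c_i)})|>0$, and then no thinning of $E_i$ can kill coordinate $\ell$, destroying ordinary coordinate-wise nullity along $J$. This is the precise point where the failure of diagonality bites. I expect to resolve it by using weak nullity once more, now against a single sign-functional supported on the stubborn coordinates: the key estimate forces every stubborn coordinate to escape to infinity and its mass to tend to $0$ as the column grows, so the stubborn parts form a perturbation whose norm tends to $0$ along the $c_i$. Absorbing this vanishing perturbation, so that Lemma \ref{lem01} still applies to the dominant, coordinate-wise null block part, is where the self-reproducing structure of $\F_D$ (Definition \ref{defselfrepr}) enters: passing to a sub-stationary $J'\subset J$ whose trace is again a copy of $\F_D$ lets me restart the selection while avoiding any prescribed finite set of early columns, and iterating neutralises each fixed coordinate in turn. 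Making this iteration converge, while keeping the set stationary and the total perturbation summable, is the principal obstacle I would have to push through; everything else is the routine block-basis bookkeeping already encapsulated in Lemmas \ref{lem01} and \ref{simple-lemma}.
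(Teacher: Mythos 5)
Your key estimate is correct (for fixed $L$, $\limsup_{k}\sum_{\ell\le L}|e_\ell^*(x_{(k,j)})|\to 0$ as the column index $j\to\infty$), and the final block-basis step would work \emph{if} you could produce a stationary $J\subset I$ along which $(x_n)$ converges coordinate-wise to $0$ in the ordinary sense. But that intermediate goal is exactly where the proof is, and your proposed resolution of the ``stubborn columns'' does not close it. If $J=\bigcup_i E_i$ with each $E_i$ an infinite subset of a column $D_{c_i}$, then ordinary coordinate-wise nullity along $J$ forces, for each fixed $\ell$, that $|e_\ell^*(x_n)|$ be small for all but finitely many $n\in E_i$ for \emph{every} $i$ (each $E_i$ contributes infinitely many terms and can never be ``outrun''). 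So you need the coordinate-wise cluster point $y_{c_i}$ of the column along $E_i$ to vanish for every $i$, not just for large $i$. Your key estimate only controls coordinates $\ell\le L$ for fixed $L$; it says the $y_j$ tend to $0$ coordinate-wise as $j\to\infty$, but it puts no bound on $\|y_j\|$, whose mass can sit on coordinates escaping to infinity with $j$. The claim that ``the stubborn parts form a perturbation whose norm tends to $0$ along the $c_i$'' is therefore unjustified, and the proposed iteration (restart via self-reproduction, dropping finitely many early columns each time) is precisely the diagonalization over columns that $\F_D$ forbids: after each restart the new first columns acquire their own stubborn coordinates at higher positions, and the target moves. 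Remark \ref{remark} (the sequence $x_n=e_n+e_{f(n)}$) shows concretely that coordinate-wise $\F_D$-nullity alone never yields such a $J$; to rule out persistent column limits you must use weak nullity against functionals that see the whole column structure, and your sketch never does this.

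The paper's proof supplies exactly the missing mechanism. After normalizing to a standard set with $\|x_n\|=1$, it extracts along each column a coordinate-wise limit $y_m$ and decomposes $x_n=y_{f(n)}+z_n$. The part you cannot kill by thinning --- the column-constant sequence $y_{f(n)}$ --- is handled by two separate arguments: if $\|z_n\|\to_{\F_D}0$, then $(y_m)$ is weakly null in the \emph{ordinary} sense (weak $\F_D$-convergence of a column-constant sequence collapses to ordinary weak convergence of $(y_m)$), and the classical Schur theorem forces $\|y_m\|\to 0$, contradicting $\|y_m\|\to 1$; otherwise $(z_n)$ is bounded below on a standard set, a triangle-enumeration block-basis selection (your routine part) makes $(z_n)_{n\in J}$ equivalent to the $\ell_1$ basis, and a functional taking values $\pm 1$ on $z_n$ with both signs occurring infinitely often in every column gives $\limsup_{n\in D_i}x^*(x_n)-\liminf_{n\in D_i}x^*(x_n)=2$ (the $y$-contribution is constant on each column and cancels), contradicting weak $\F_D$-nullity. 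You would need to add something equivalent to this two-case analysis --- in particular an appeal to the classical Schur theorem for the column limits and a sign-functional argument exploiting that every $B_{m,C}$ is cofinite in every high column --- before your outline becomes a proof.
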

\begin{proof}
First remark, that a subset $J \subset N$ is $\F_{D}$-stationary
if and only if the condition (\ref{eq.stand}) is met. In
particular, for every infinite subset $M \subset \N$ and for every
selection of infinite subsets $A_n \subset D_n$, $n \in M$ the set
$\bigcup_{n \in M} A_n$ is an $\F_{D}$-stationary set. Let us call
such sets of the form $\bigcup_{n \in M} A_n$ ``standard sets''.
Every $\F_{D}$-stationary set contains a standard subset. Remark
also that the structure of the trace of $\F_{D}$ on a standard
subset $J$ is exactly the same as of the original filter $\F_{D}$,
i.e. $\F_{D}$ is self-reproducing.

To prove the theorem assume contrary that there is a sequence
$(x_n) \subset \ell_1$, $n \in N$ that $\F_{D}$-weakly converge to
zero but the norms do not $\F_{D}$-converge to zero. So there is
an $\eps > 0$ and such a standard set $J \subset N$, that $\|x_n\|
\ge \eps$ for all $n \in J$. According to the previous remark, we
may assume without loss of generality that $J=N$, i.e. $\|x_n\|
\ge \eps$ for all $n \in N$. Passing from $x_n$ to $x_n / \|x_n\|$
we may suppose that  $\|x_n\| = 1$ for all $n$. For every fixed $m
\in \N$ select a subsequence of $D_m' \subset D_m$, such that
$x_n, n \in D_m'$ coordinate-wise converge to an element $y_m \in
\ell_1$. Passing to a new standard set of indexes $\bigcup_{m\in
\N} D_m'$ we reduce the situation to the case when $x_n, n \in
D_m$ converge coordinate-wise to $y_m$ for every $m\in \N$.

Notice that due to the weak $\F_{D}$-convergence to zero of the
whole sequence $(x_n), n \in N$, $y_m$ converge coordinate-wise to
zero. In fact, for arbitrary coordinate functional $e_k^*$ and for
every $\eps > 0$ there is a set of the form $B_{m,C}$ such that
$|e_k^*(x_j)| < \eps$ for all $j \in B_{m,C}$. This means that for
$i \in \N$, $i > m$ we have
$$|e_k^*(y_i)| = \lim_{j \in D_i} |e_k^*(x_j)| \le \eps.$$
This means in its turn the desired coordinate-wise convergence to
zero of $(y_m)$.

Introduce a notation: for $n \in N$ denote by $f(n)$ such index
that $n \in D_{f(n)}$. Put $z_n = x_n - y_{f(n)}$. Consider two
cases. The first one: $\|z_n\| \to_{ \F_{D}}0$. In this case
$\|y_m\| \to 1$ as $m \to \infty$, but on the other hand the
condition $y_{f(n)} = x_n - z_n \stackrel{w}{\rightarrow}_{
\F_{D}}0$ implies ordinary weak convergence of $(y_m)$ to 0, which
is impossible according to the Schur theorem. In the remaining
case, there is a standard set on which $\|z_n\|$ are bounded from
below, so we may again without loss of generality assume that
$\|z_n\| > \eps > 0$ for all $n \in N$.

{\bf Claim.} There is such a standard set $J \subset N$ that the
sequence $(z_n)_{n \in J}$ is equivalent to the canonical basis of
$\ell_1$.

{\bf Proof of the claim.} Fix a decreasing sequence of $\delta_k >
0$, $k \in N$, $\sum_{k \in N}\delta_k \le \eps/8$. Using the
definition of $\ell_1$ let us select naturals $m(n)$ such that for
every $n \in N$ the condition
$$
\sum_{k \ge m(n)}|e_k^*(z_n)| < \delta_n
$$
holds true. Take an arbitrary $n_1 \in D_1$. Now using
consequently the coordinate-wise convergence to 0 of sequences
$(z_n)$, $n \in D_m$ for values of $m= 1, 2, \ 1, 2, 3, \ 1, 2, 3,
4,\ \ldots$ select a sequence $(n_i) \subset N$ in such a way that
$n_2 \in D_1$, $n_3 \in D_2$, $n_4 \in D_1$, $n_5 \in D_2$, $n_6
\in D_3$, etc. (like triangle enumeration of a matrix) and for
every $i \in \N$
$$
\sum_{k \le s(i)} |e_k^*(z_{n_{i+1}})| < \frac{\eps}{2^{i+3}},
$$
where $s(i)$ denotes $\max_{k \le i}m(n_k)$. Under this
construction $J = (n_i)_{i \in \N}$ is a standard set, and
$z_{n_{i}}$ is just a small perturbation of the block-basis $w_i =
\sum_{k \in (s(i-1),m(n_{i})]}e_k^*(z_{n_i})e_k$, which due to the
Lemma~\ref{lem01} means that {\bf the claim is proved}.

Now the last step. Once more without loss of generality assume
that $J \subset N$ from the Claim in fact equals $N$, i.e.
$(z_n)_{n \in N}$ are equivalent to the canonical basis of
$\ell_1$. Then for every bounded sequence of scalars $(a_n)_{n \in
N}$  there is a functional $x^* \in \ell_1^*$ such that $x^*(z_n)
= a_n$ for all $n \in N$. Select these $a_n = \pm 1$ in such a way
that for every $i \in \N$
$$|\{n \in D_i: a_n = 1\}|=|\{n \in D_i: a_n = -1\}|= \infty.$$
Then for the corresponding functional $x^*$ we have for every $i \in \N$
$$
\limsup_{n \in D_i}x^*(x_n) - \liminf_{n \in D_i}x^*(x_n)=
\limsup_{n \in D_i}x^*(z_n) - \liminf_{n \in D_i}x^*(z_n) = 2,
$$
which contradicts weak $\F_D$-convergence of $x_n$.
\end{proof}

\section{Category respecting and strongly diagonal filters and ultrafilters}

Let us introduce one more class of filters, which are
block-respecting and diagonal at the same time.

\begin{definition} \label{def2.3}
$\F$ is said to be {\it strongly diagonal} if for every decreasing
sequence $(A_n) \subset \F$ of the filter elements and for every
$I \in \F^*$ there is a $J \in \F^*$, $J \subset I$ such that
\begin{equation} \label{str.diag}
|(J \cap A_n) \setminus A_{n+1}| \le 1 {\rm \,\, for \,\, all \,\,} n \in \N.
\end{equation}
\end{definition}

According to the Theorem \ref{Schur-thm} all strongly diagonal
filters have the Schur property.

\begin{definition}
A filter $\F$ on $\N$ is said to be {\it category respecting} if
for every compact metric space $K$ and for every family of closed
subsets $(F_A)_{A\in \F}$ of $K$ if
  $$
   F_A \subset F_B, \text{ whenever } B\subset A \text{ in } \F,
  $$
and $K = \bigcup_{A\in \F}F_A $ then $\interior(F_B)\not =
\emptyset$ for some $B\in \F$.
\end{definition}
The obvious examples of category respecting filters are those of
countable base.  Moreover, every filter with a base of cardinality
$k < \mathfrak{m}$ is category respecting (see \cite{Frem}, p.~3-4
for the definition of $\mathfrak{m}$ and Theorem 13A, p.~16 for
the corresponding result). But the Martin Axiom means that
$\mathfrak{m}$ equals the cardinality of continuum, so if we
accept the Martin Axiom together with negation of the continuum
hypothesis, we can go to some filters with uncountable base.

The proof of Schur property for $\ell_1$ using the Baire theorem
as presented in \cite[Propostion 5.2]{con} gives a hint that
category respecting filters are related to the Schur property. The
next theorem shows that in fact to be category respecting is a
stronger restriction than to have the Schur property.

\begin{theorem} \label{thm.2cat} If $\F$ is a category respecting
filter on $\N$, then $\F$ is strongly diagonal.
\end{theorem}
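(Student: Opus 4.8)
The plan is to unwind what Definition \ref{def2.3} asks and then feed a suitable compact space into the category-respecting hypothesis. Given a decreasing $(A_n)\subset\F$ and $I\in\F^*$, write $D_n=A_n\setminus A_{n+1}$ for the $n$-th ``layer''. Since $(J\cap A_n)\setminus A_{n+1}=J\cap D_n$, what I must produce is a stationary $J\subset I$ meeting each layer in at most one point. First I would make a harmless reduction: if $R:=(I\setminus A_1)\cup(I\cap\bigcap_n A_n)$ is $\F$-stationary, then $J=R$ already works, being disjoint from every $D_n$; otherwise some $A_*\in\F$ is disjoint from $R$, and after replacing $I$ by $I\cap A_*$ (still in $\F^*$) I may assume that every point of $I$ lies in exactly one layer.

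Next I build the space. For each $n$ let $K_n$ be the one-point compactification $(D_n\cap I)\cup\{\infty_n\}$ of the (at most countable) discrete set $D_n\cap I$, and set $K=\prod_n K_n$, which is compact and metrizable. A point $x=(x_n)\in K$ codes the partial transversal $J_x=\{x_n : x_n\ne\infty_n\}$, which by construction meets each layer at most once; so a \emph{stationary} $J_x$ is exactly the object I want. For $A\in\F$ put $F_A=\{x\in K : J_x\cap A=\emptyset\}=\prod_n\big((D_n\cap I\setminus A)\cup\{\infty_n\}\big)$. Each factor contains $\infty_n$, hence is closed in $K_n$, so $F_A$ is closed; and if $B\subset A$ then $D_n\cap I\setminus A\subset D_n\cap I\setminus B$, giving $F_A\subset F_B$, which is the monotonicity required by the definition.

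Now argue by contradiction: assume no $J_x$ is stationary. Then every $x$ lies in some $F_A$, so $K=\bigcup_{A\in\F}F_A$, and being category respecting yields $B\in\F$ with $\interior(F_B)\ne\emptyset$. A nonempty basic open set $\prod_n U_n\subset F_B$ satisfies $U_n=K_n$ for all $n$ outside a finite set $S$; for such $n$ the inclusion $K_n\subset(D_n\cap I\setminus B)\cup\{\infty_n\}$ forces $B\cap D_n\cap I=\emptyset$. Thus $B\cap I$ meets only the layers indexed by $S$, so with $m$ exceeding all indices in $S$ we get $B\cap I\subset\bigcup_{n\in S}D_n\subset\N\setminus A_m$, i.e. $B\cap A_m\cap I=\emptyset$. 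But $B\cap A_m\in\F$ and $I\in\F^*$, so this intersection is nonempty, a contradiction. Hence some $J_x$ is stationary, and $J=J_x$ is the desired set.

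The main obstacle is arranging the topology so that the category hypothesis actually bites: the layers $D_n\cap I$ may be infinite, so I must compactify each coordinate by the ``skip this layer'' point $\infty_n$ to keep $K$ compact while still allowing $J_x$ to be an arbitrary partial transversal. The two delicate checks are that $F_A$ is genuinely closed (which is why each factor must contain its point at infinity) and that decoding a single nonempty basic open set inside $F_B$ yields the strong statement $B\cap D_n\cap I=\emptyset$ for all but finitely many $n$, rather than something weaker; the preliminary reduction removing the non-layer part $R$ of $I$ is precisely what makes the resulting contradiction with $I\in\F^*$ clean.
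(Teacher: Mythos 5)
Your proof is correct and uses essentially the same construction as the paper: the compact metrizable product of the one-point compactifications of the layers $D_n$, with $F_A$ the set of partial transversals missing $A$. The only differences are presentational — you argue directly from the category-respecting hypothesis (deriving a contradiction from a nonempty interior) rather than by contrapositive, and you carry out the preliminary reduction to $I\subset\bigcup_n D_n$ a bit more explicitly.
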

\begin{proof}
Assume contrary that $\F$ is not strongly diagonal, i.e. there is
a decreasing sequence $(A_n) \subset \F$ of the filter elements
and there is an $I \in \F^*$ such that for all $J \in \F^*$, $J
\subset I$ the condition (\ref{str.diag}) is not met. Without loss
of generality we may assume that the filter is defined only on $I$
(pass to the trace of $\F$ on $I$), that $\bigcap_{n \in \N} A_n =
\emptyset$ (this intersection is not stationary, so we may just
erase this intersection from $I$)  and that all $D_n := A_n
\setminus A_{n+1}$ are not empty. If one picks up a sequence of
finite subsets
\begin{equation} \label{not-str.diag}
C_n \subset D_n, \,\,\, \sup_{n \in \N} |C_n| < \infty \,\,\text{ then }\,\,
\N \setminus \bigcup_{n \in \N}C_n \in \F.
\end{equation}
Let us introduce the following compact topological spaces
$\widetilde{D}_n$: if $D_n$ is finite then $\widetilde{D}_n = D_n$
with discrete topology; if $D_n$ is infinite then $\widetilde{D}_n
= D_n \bigcup \{\infty_n\}$ -- the one-point compactification of
$D_n$. Recall that $K=\prod_{n \in \N}\widetilde{D}_n$ is compact
in coordinate-wise convergence topology and metrizable. Define a
family of closed sets $(F_A)_{A\in \F}$ in $K$ as follows:
$$
F_A = \{x \in K: \pi_{n}(x) \in \widetilde{D}_n \setminus A \text{
for all } n \in \N \},
$$
where $\pi_{n}: K \to  \widetilde{D}_n$ stands for the $n$-th
coordinate projection. These sets are closed and have empty
interior (the interior could be non-empty only if for a
sufficiently large $m$  $D_n \cap A = \emptyset$ for all $n \ge
m$, which is not the case because $\bigcup_{k \ge m}D_k = A_m \in
\F$). For every $x \in K$ the set $A(x) = \N \setminus \bigcup_{n
\in \N}\{\pi_{n}(x)\}$ is a filter element (due to
(\ref{not-str.diag})) and $x \in F_A$. So the union of all
$(F_A)_{A\in \F}$ equals $K$. Contradiction.
\end{proof}
\begin{corollary} If $\F$ is a category respecting filter on $\N$, then
$\F$ is a Schur filter.
\end{corollary}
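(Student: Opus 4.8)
The plan is to read off this corollary from the implications already assembled. Theorem~\ref{thm.2cat} supplies the first link: a category respecting filter is strongly diagonal. Theorem~\ref{Schur-thm} supplies the last: a filter that is simultaneously diagonal and block-respecting has the Schur property. So the whole task reduces to the purely combinatorial observation announced just after Definition~\ref{def2.3}, namely that strong diagonality implies both diagonality and the block-respecting property. Granting that, the chain ``category respecting $\Rightarrow$ strongly diagonal $\Rightarrow$ (diagonal and block-respecting) $\Rightarrow$ Schur'' closes immediately.

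First I would verify that strongly diagonal implies diagonal. Given a decreasing $(A_n) \subset \F$ and $I \in \F^*$, prepend $A_0 = \N$ (the sequence stays decreasing and inside $\F$) and apply strong diagonality to obtain $J \in \F^*$, $J \subset I$, with $|(J \cap A_n) \setminus A_{n+1}| \le 1$ for all $n$. Writing $D_k = A_k \setminus A_{k+1}$, this says $|J \cap D_k| \le 1$. Since $A_0 = \N$, every $j \in J$ lies either in $\bigcap_n A_n$ (and then in no set $J \setminus A_n$) or in exactly one $D_k$; an element of $D_k$ belongs to $J \setminus A_n$ precisely when $n > k$. Hence $J \setminus A_n = \bigcup_{k<n}(J \cap D_k)$ has at most $n$ elements, so $|J \setminus A_n| < \infty$, which is diagonality.

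Next, strongly diagonal implies block-respecting, and this is the one place where a little care is needed: one must encode a given blocking as a decreasing sequence of genuine filter elements. Given $I \in \F^*$ and a blocking $D = \{D_k\}$ of $I$ into finite sets, I would set $A_n = \N \setminus \bigcup_{k<n} D_k$. Each $A_n$ is cofinite, hence lies in $\F$ because $\F$ is free, and $(A_n)$ is decreasing. A direct computation gives $A_n \setminus A_{n+1} = D_n$, so strong diagonality produces $J \in \F^*$, $J \subset I$, with $|J \cap D_n| \le 1$ for all $n$; as $J \subset I = \bigcup_k D_k$, this is exactly the block-respecting condition in its equivalent $\le 1$ form.

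With both properties in hand the corollary follows at once: by Theorem~\ref{thm.2cat} a category respecting filter is strongly diagonal, hence diagonal and block-respecting, and Theorem~\ref{Schur-thm} then yields the Schur property. I do not anticipate a genuine obstacle here; the lone subtlety is the representation of an arbitrary blocking by cofinite filter elements, which is legitimate only because all filters under consideration are free.
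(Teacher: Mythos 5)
Your proof is correct and follows the paper's intended route exactly: the paper derives this corollary by combining Theorem~\ref{thm.2cat} with the remark after Definition~\ref{def2.3} that strongly diagonal filters satisfy the hypotheses of Theorem~\ref{Schur-thm}. Your only addition is to spell out the (correct) verifications that strong diagonality implies diagonality and the block-respecting property, details the paper leaves implicit.
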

\begin{corollary}\label{example def2.3}
Every filter with a countable base is strongly diagonal.
\end{corollary}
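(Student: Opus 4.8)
The statement admits a one-line deduction from what precedes: the paragraph following the definition of category respecting filters records that every filter with a countable base is category respecting (a Baire category observation), and Theorem~\ref{thm.2cat} turns this into strong diagonality. I would, however, prefer to give a direct combinatorial argument that bypasses the category machinery, since it is self-contained and clarifies exactly how the countable base is used.

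The plan is as follows. Fix a filter $\F$ with a countable base; replacing the base by the family of its finite intersections I may assume it is a decreasing base $B_1 \supset B_2 \supset \cdots$, and freeness forces $\bigcap_k B_k = \emptyset$. Now fix a decreasing sequence $(A_n) \subset \F$ and an $I \in \F^*$ as in Definition~\ref{def2.3}. Writing $D_n = A_n \setminus A_{n+1}$, the required inequality $|(J \cap A_n)\setminus A_{n+1}| \le 1$ is just $|J \cap D_n| \le 1$, so I must produce a stationary $J \subset I$ meeting each block $D_n$ at most once. The one elementary fact I will lean on is that, since $I$ is $\F$-stationary, $I \cap A$ is infinite for every $A \in \F$ (otherwise removing the finitely many common points from $A$ would give a filter element disjoint from $I$).

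The set $J$ is then built by a greedy selection. Suppose $j_1, \ldots, j_{m-1}$ have been chosen, each lying in some block $D_{n(i)}$ or in the tail $\bigcap_k A_k$; let $N$ be the largest landing index $n(i)$ used so far. Put $M = \max(m, N+1)$; since $A_M \cap B_m \in \F$, the set $I \cap A_M \cap B_m$ is infinite, so I may pick $j_m$ in it, distinct from the earlier points. Because $j_m \in A_M$, if $j_m$ falls in a block $D_n$ then necessarily $n \ge M > N$, so its landing index strictly exceeds all previous ones. Consequently no block receives two points of $J = \{j_m : m \in \N\}$, and points falling in the tail contribute nothing to the counts. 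Finally $J$ is $\F$-stationary: for each $k$ one has $j_m \in B_m \subseteq B_k$ for all $m \ge k$, so $J$ meets every $B_k$ and hence every element of $\F$. This yields the desired $J$ and proves strong diagonality.

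The main obstacle is the tension between the two demands on $J$: it must be thin enough to hit each $D_n$ at most once, yet thick enough to stay $\F$-stationary. The resolution is precisely the infinitude of $I \cap A_M \cap B_m$, which lets each newly selected point be simultaneously pushed deep into $A_M$ (to force a fresh, strictly larger landing index) and into $B_m$ (to keep $J$ cofinal in the base, hence stationary). No further property of countable-base filters is needed.
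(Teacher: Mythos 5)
Your proposal is correct, and it deliberately takes a different route from the paper. The paper obtains this corollary for free: it has already recorded that every filter with a countable base is category respecting (a Baire category fact), and Theorem~\ref{thm.2cat} converts category respect into strong diagonality; you identify this deduction accurately in your opening sentence. Your alternative is a direct greedy construction, and it checks out: reducing to a decreasing base $B_1\supset B_2\supset\cdots$, using that $I\cap A$ is infinite for every $A\in\F$ when $I\in\F^*$ (freeness is what lets you discard finitely many points), and choosing $j_m\in I\cap A_M\cap B_m$ with $M$ exceeding all landing indices used so far correctly forces each block $D_n=A_n\setminus A_{n+1}$ to receive at most one point, while $j_m\in B_m$ for all $m$ keeps $J$ cofinal in the base and hence stationary. (Points of $J$ falling in $\bigcap_n A_n$ are indeed harmless, and your $\max(m,N+1)$ is slightly redundant but does no damage.) What each approach buys: the paper's argument is a one-line corollary of a stronger structural theorem that also covers filters with bases of cardinality below $\mathfrak{m}$, so it yields more than the countable case at no extra cost; your argument is self-contained, avoids the compact product space and the Baire category theorem entirely, and makes transparent exactly where the countable base enters (namely, in being able to thread the selected points through $B_1, B_2,\ldots$ one at a time). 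It is also essentially the inductive step reused in the transfinite construction of Theorem~\ref{ultra-schur}, so having it in explicit combinatorial form is genuinely useful.
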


\begin{theorem}\label{ultra-schur}
Under the assumption of continuum hypothesis there is a strongly
diagonal ultrafilter.
\end{theorem}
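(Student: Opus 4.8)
The plan is to build the ultrafilter by transfinite recursion of length $\omega_1$, using the Continuum Hypothesis to guarantee that $\mathfrak{c}=\aleph_1$. First I would reformulate the target property. For an ultrafilter $\U$ one has $\U^*=\U$, so $\U$ is strongly diagonal \ifff for every decreasing sequence $(A_n)\subset\U$ and every $I\in\U$ there is a $J\in\U$, $J\subseteq I$, with $|J\cap D_n|\le 1$ for all $n$, where $D_n:=A_n\setminus A_{n+1}$. Since each $A_{n+1}\in\U$, every level $D_n$ lies outside $\U$, so this is exactly a selector condition for the partition $\{D_n\}$: thus I only need an ultrafilter that, for each decreasing tower of its members, contains a set meeting every level in at most one point (the restriction $J\subseteq I$ is then obtained for free by replacing $J$ with $J\cap I\in\U$).

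Under CH enumerate all subsets of $\N$ as $\{X_\alpha:\alpha<\omega_1\}$ and all decreasing sequences of subsets as $\{(A_n^\alpha)_n:\alpha<\omega_1\}$. I would construct a $\subseteq^*$-decreasing tower of infinite sets $(B_\alpha)_{\alpha<\omega_1}$ handling one task at each successor stage. At stage $\alpha+1$, starting from the infinite set $B_\alpha$, first split off the ultrafilter task: one of $B_\alpha\cap X_\alpha$, $B_\alpha\setminus X_\alpha$ is infinite, pass to it, thereby deciding the eventual membership of $X_\alpha$. Then treat the selector task for $(A_n^\alpha)$ on the resulting infinite set $B$.

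The heart of the argument is this selector step. Write $D_n=A_n^\alpha\setminus A_{n+1}^\alpha$ and $A_\infty=\bigcap_n A_n^\alpha$. If $B\cap A_m^\alpha$ is finite for some $m$, I do nothing ($B\subseteq^*\N\setminus A_m^\alpha$ forces $A_m^\alpha\notin\U$, so this tower never lies in $\U$ and imposes no constraint). Otherwise $B\cap A_n^\alpha$ is infinite for all $n$; if $B\cap A_\infty$ is infinite I take $B_{\alpha+1}=B\cap A_\infty$, which meets no $D_n$; if $B\cap A_\infty$ is finite, then $B$ meets $D_m$ for arbitrarily large $m$, and picking a single point of $B$ from infinitely many distinct levels yields an infinite $B_{\alpha+1}\subseteq B$ with $|B_{\alpha+1}\cap D_n|\le 1$ for all $n$. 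At a limit stage $\lambda<\omega_1$ the set $\{B_\alpha:\alpha<\lambda\}$ is a countable $\subseteq^*$-decreasing tower, so it has an infinite pseudo-intersection $B_\lambda$ (diagonalize along a cofinal $\omega$-sequence). Finally set $\U=\{A\subseteq\N:\exists\alpha\ B_\alpha\subseteq^* A\}$; this is a free ultrafilter because every $X_\alpha$ was decided, and it is strongly diagonal because any decreasing $(A_n)\subset\U$ equals some $(A_n^\alpha)$, was necessarily handled in the nontrivial case, and the selector $B_{\alpha+1}\in\U$ witnesses the condition after intersecting with the given $I$.

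The main obstacle I expect is keeping the recursion alive across the $\aleph_1$ steps: this is exactly where CH is indispensable, since only under $\mathfrak{c}=\aleph_1$ do both enumerations have length $\omega_1$ and do all the limit-stage towers stay countable, guaranteeing the needed pseudo-intersections. The selector extraction itself is the one genuinely combinatorial point and must be checked case-by-case as above; everything else (the $\subseteq^*$ bookkeeping and the verification that $\U$ is a free ultrafilter) is routine.
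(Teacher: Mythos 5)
Your proof is correct and follows essentially the same route as the paper: a transfinite recursion of length $\omega_1$ under CH that handles one diagonalization task per successor step and takes pseudo-intersections (equivalently, unions of countably based filters) at limit stages. The only real difference is organizational --- you phrase the construction as a $\subseteq^*$-decreasing tower and carry out the selector extraction for each decreasing sequence $(A_n^\alpha)$ by an explicit case analysis, whereas the paper keeps an increasing chain of countably based filters and delegates exactly that step to Corollary \ref{example def2.3}; both versions of the key step are sound.
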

\begin{proof}
Denote by $\Omega$ the set of all countable ordinals. Let us
enumerate as $(I(\alpha),A(\alpha)),\, \alpha \in \Omega$ all the
pairs $(I,A)$, where $I$ is an infinite subset of $\N$, and $A$ is
a decreasing sequence of infinite subsets of $\N$:
$A(\alpha)=(A_n(\alpha))_{n\in\N}$,  $\N \supset
A_1(\alpha)\supset A_2(\alpha) \ldots$. We construct recurrently
an increasing family $\F_\alpha, \alpha < \omega_1$ of filters
with countable base and an increasing family of sets
$\Omega_\alpha\subset\Omega$, as follows: $\F_1$ is the Frech\'et
filter, $\Omega_1= \emptyset$.  If we have an ordinal of the form
$\alpha + 1$ we proceed as follows: we find the smallest $\beta
\in \Omega\setminus\Omega_\alpha$ such that $I(\beta) \in
\F_\alpha^*$ and such that $A(\beta)$ consists of $\F_\alpha$
elements. Applying Corollary \ref{example def2.3}, we find a $J
\in \F_\alpha^*, \, J \subset I = I(\beta)$ such that
(\ref{str.diag}) holds true for $A_n = A_n(\beta)$. Define
$\F_{\alpha + 1}$ as the filter generated by $\F_\alpha$ and $J$,
and put $\Omega_{\alpha + 1}= \Omega_\alpha \cup \{\beta\}$.

If $\alpha$ is a limiting ordinal, put $\F_\alpha= \bigcup_{\beta
< \alpha}\F_\beta$ and $\Omega_\alpha= \bigcup_{\beta <
\alpha}\Omega_\beta$.

Define the filter $\F$ we need as $\F = \bigcup_{\beta <
\omega_1}\F_\beta$. Let us demonstrate that $\F$ is an
ultrafilter.  To do this we must prove that $\F^* \subset \F$. Let
$B \in \F^*$. Then $B \in \F_\alpha^*$ for all $\alpha$. Let
$\beta\in\Omega$ be the smallest ordinal, for which $I(\beta) = B$
and $A(\beta)$ consists of filter $\F$ elements. Then there is an
$\alpha$, for which all $A_n(\beta)$ belong to $\F_\alpha$. If
$\beta\in\Omega_\alpha$ this means that the pair $(I(\beta),
A(\beta))$ has appeared in our recurrent construction, and a
subset $J$ of $B$ (and hence $B$ itself) was added to the filter.
If not, then not later than on the step $\alpha + 1 + \beta$ this
pair $(I(\beta), A(\beta))$ has appeared in our recurrent
construction and a subset $J$ of $B$ was added to the filter. By
the same argument $\F$ is strongly diagonal.
\end{proof}

Notice that the diagonality of an ultrafilter $\F$ is equivalent
to the following well-known property: $\F$ is a ``P-point of
$\beta \N$''. The consistency of P-points non-existence is a
celebrated result of Shelah \cite{Wim}. So, since every strongly
diagonal filter is diagonal some set theoretic assumption is
needed for the last theorem. By the way in the setting of
ultrafilters a property equivalent to ``block-respect'', called
``Q-point of $\beta \N$" was also studied and the non-existence of
Q-points is also known to be consistent \cite{mill}.

To conclude this section let us present an example of a strongly
diagonal filter which is not category respecting. This example
resembles strongly the proof of Theorem \ref{thm.2cat}. Let $D=
\{D_n\}_{n \in \N}$ be a disjoint partition of $N$ into infinite
subsets. For every sequence $C=\{C_n\}_{n \in \N}$ of finite
subsets $C_n \subset D_n$ introduce the set $B_{C}=\bigcup_{n\in
\N}(D_n \setminus C_n)$. The sets $B_{C}$ form a filter base.
Denote the corresponding filter by $\F_d$.  A set $J \subset \N$
is $\F_d$-stationary \ifff there is an $n \in \N$ such that $|J
\cap D_n| = \infty$.  One can easily see that $\F_d$ is strongly
diagonal. To show that it is not category respecting consider the
same system of subsets $(F_A)_{A\in \F}$ of the same compact $K$
as in the proof of the Theorem \ref{thm.2cat}. The only difference
is that now in the definition of $K$ we don't need to consider the
case of finite $D_n$. These sets $F_A$ are closed, they have empty
interior, but their union contains all the $K$, which would be
impossible if $\F_d$ was category respecting.

\section{Weak sequential completeness theorem for filters}

\begin{definition} \label{def3.1}
A filter $\F$ on $\N$ is said to be {\it weak $\ell_1$ complete}
(or in abbreviated form WC1-filter) if for every $\F$-convergent
in the topology $\sigma(\ell_1^{**}, \ell_1^{*})$ bounded sequence
$(x_n) \subset \ell_1$ its weak* $\F$-limit $x \in \ell_1^{**}$ in
fact belongs to $\ell_1$.
\end{definition}

It is known that every Banach space with the Schur property is
weakly sequentially complete. The next theorem  together with the
Theorem \ref{ultra-schur} shows that the picture for filters is
more colorful.

\begin{theorem}\label{ultra-WC1}
An ultrafilter cannot be weak $\ell_1$ complete.
\end{theorem}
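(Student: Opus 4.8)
The plan is to produce, for an arbitrary ultrafilter $\F$, one bounded sequence in $\ell_1$ whose $\sigma(\ell_1^{**},\ell_1^*)$-limit fails to lie in $\ell_1$; the obvious candidate is the canonical basis $(e_n)$ itself, since $\|e_n\|=1$ for all $n$. First I would record that this sequence is automatically weak*-$\F$-convergent: the closed unit ball of $\ell_1^{**}=(\ell_1^*)^*$ is weak*-compact by the Banach--Alaoglu theorem and Hausdorff, and an ultrafilter on $\N$ converges, to a unique point, for every map of $\N$ into a compact Hausdorff space. (Alternatively this is immediate from Theorem~\ref{stationary-thm}, since $\F^*=\F$ for an ultrafilter.) Call the resulting limit $x\in\ell_1^{**}$.

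The second step is to identify $x$ as a functional on $\ell_1^*=\ell_\infty$. For $f=(f_k)\in\ell_\infty$ the pairing $\langle e_n,f\rangle$ equals the $n$-th coordinate $f_n$, so by the definition of the weak* $\F$-limit
$$
\langle x,f\rangle=\F\text{-}\lim_n f_n .
$$
In other words $x$ is the ultrafilter-evaluation functional on $\ell_\infty$ attached to $\F$. Testing it on the constant sequence $f=(1,1,1,\dots)\in\ell_\infty$ gives $\langle x,f\rangle=1$, so $x\neq 0$.

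The final step is to show $x\notin\ell_1$, which contradicts the WC1 property and finishes the proof. Suppose, for contradiction, that $x$ coincides with some $a=\sum_k a_k e_k\in\ell_1\subset\ell_1^{**}$. Evaluating both descriptions of $x$ on the coordinate indicator $e_k^*\in\ell_\infty$ (the element of $\ell_\infty$ that is $1$ at $k$ and $0$ elsewhere) yields on one side $a_k$ and on the other $\F\text{-}\lim_n\,[\,n=k\,]=0$, the latter because $\F$ is free and hence contains the complement of the singleton $\{k\}$. Thus $a_k=0$ for every $k$, i.e. $a=0$, contradicting $x\neq 0$. Hence $x\notin\ell_1$, so $\F$ is not weak $\ell_1$ complete.

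I do not expect a serious obstacle here: the whole content is the computation of the weak* $\F$-limit of $(e_n)$, and the only point requiring a little care is the clean separation of the two roles of $\ell_\infty$ --- as the predual where we test convergence, and as the source of the two test functionals (the constant sequence and the coordinate indicators) that furnish the contradiction. The freeness of $\F$, a standing assumption of the paper, is exactly what forces the limit to be purely finitely additive and therefore to miss $\ell_1$.
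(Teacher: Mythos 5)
Your proof is correct and follows essentially the same route as the paper: both take the canonical basis $(e_n)$, identify its weak* $\F$-limit as the functional $\lim_\F$ on $\ell_\infty$, and observe that this functional is not in $\ell_1$. You merely spell out in detail (via the constant sequence and the coordinate indicators, using freeness of $\F$) the step the paper dismisses as evident.
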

\begin{proof}
Let $\F$ be a (free as always) ultrafilter on $\N$. Consider an
arbitrary $f=(f_1,f_2, \ldots) \in \ell_\infty = \ell_1^{*}$. Then
for the canonical basis $(e_n)$ of  $\ell_1$ we have
$$
\lim_\F f(e_n) = \lim_\F f_n,
$$
which shows that the sequence $(e_n)$ weakly* $\F$-converges to
the functional $\lim_\F$ on $\ell_\infty$, which evidently does
not belong to $\ell_1$.
\end{proof}

To show that a WC1-filter may have no Schur property (and even to
be without the simplified Schur property), let us recall some
elements of statistical convergence theory  \cite{Fast},
\cite{Con_Top}.

A sequence $(x_k)$ in a topological space $X$ is {\it
statistically convergent to $x$} if for every neighborhood $U$ of
$x$ the set $\{k: x_k \not\in U\}$ has natural density 0, where
the natural density of a subset $A\subset \N$ is defined to be
$\delta(A):=\lim_n n^{-1}|\{k\leq n: k\in A\}|$.

Denote $\F_s=\{I\subset\N: \delta(\N \setminus I)=0\}$ and remark
that $\F_s$ is a filter. As it is easy to see, $\F_s$-convergence
and statistical convergence coincide, and a set $J$ is
$\F_s$-stationary provided $\delta(J) \neq 0$.

Recall that a scalar sequence $(x_k)$ is said to be strongly
Cesaro-summable if there is a scalar $x$ such that
$$\lim_{n \to \infty}\frac{1}{n}\sum_{j=1}^n|x-x_j|=0.$$
It is known that a bounded scalar sequence is statistically
convergent \ifff it is strongly Cesaro-summable (for a general
version of this criterion see \cite[Theorem 8]{Con_Two}). Let us
apply this fact.
\begin{theorem}\label{stat-WC1}
$\F_s$ is a WC1-filter but does not have the simplified Schur property.
\end{theorem}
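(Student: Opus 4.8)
The statement has two independent halves, so the plan is to split the proof into two parts and treat them separately.

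\emph{First, that $\F_s$ does not have the simplified Schur property.} The cleanest route is to invoke Theorem~\ref{simple-thm}: a filter has the simplified Schur property \ifff it is block-respecting, so it suffices to exhibit an $I \in \F_s^*$ and a blocking $D$ of $I$ witnessing the failure of the block-respecting property. Here I would take $I = \N$ (which is $\F_s$-stationary since $\delta(\N)=1$) and choose a blocking $D = \{D_k\}$ whose blocks grow fast enough that any $J$ meeting each $D_k$ in at most one point is forced to be thin. Concretely, if $|D_k| \to \infty$, then a selector $J$ with $|J \cap D_k| \le 1$ satisfies $|J \cap \{1,\dots,n\}| = o(n)$, so $\delta(J) = 0$ and hence $J \notin \F_s^*$; thus no stationary selector exists and $\F_s$ is not block-respecting. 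The one computation to get right is the density estimate: with blocks of length roughly $k$ (so the $k$-th block ends near $n \approx k^2/2$), a set hitting each block once has at most $\sqrt{2n}$ points below $n$, giving density zero. This half should be short.

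\emph{Second, that $\F_s$ is a WC1-filter.} This is the substantive part and I expect it to be the main obstacle. Let $(x_n) \subset \ell_1$ be bounded and weak*-$\F_s$-convergent to some $x \in \ell_1^{**}$; I must show $x \in \ell_1$. The plan is to test against the coordinate functionals $e_k^* \in \ell_\infty = \ell_1^*$: since $(x_n)$ is weak*-$\F_s$-convergent, each coordinate sequence $(e_k^*(x_n))_n$ is $\F_s$-convergent, i.e.\ statistically convergent, to some scalar $\xi_k := x(e_k^*)$, and the natural candidate for the limit in $\ell_1$ is $\tilde x = (\xi_k)_k$. The goal is to prove $\sum_k |\xi_k| < \infty$ (so $\tilde x \in \ell_1$) and that $x = \tilde x$ as elements of $\ell_1^{**}$. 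The boundedness hypothesis $\sup_n \|x_n\|_1 = M < \infty$ should yield $\sum_{k \le m}|\xi_k| \le M$ for every finite $m$: for each fixed $m$ the finite sum $\sum_{k\le m}|e_k^*(x_n)|$ is bounded by $M$ for all $n$, and statistical convergence on finitely many coordinates passes to the limit, giving $\sum_{k \le m}|\xi_k| \le M$ and hence $\tilde x \in \ell_1$ with $\|\tilde x\|_1 \le M$.

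The delicate point is identifying the weak*-limit functional $x \in \ell_\infty^*$ with $\tilde x$ — that is, showing $x(f) = \sum_k f_k \xi_k$ for \emph{every} $f = (f_k) \in \ell_\infty$, not merely for finitely supported $f$. Here I would exploit the characterization of bounded statistical convergence via strong Ces\`aro summability that the authors have just recalled: for a bounded scalar sequence, $\F_s$-convergence is equivalent to $\lim_n \frac1n \sum_{j=1}^n |x - x_j| = 0$. The strategy is to use the strong Ces\`aro control to estimate the tail contribution $\sum_{k > m} f_k e_k^*(x_n)$ uniformly along a set of density one, thereby trading the weak*-limit against the $\ell_1$-element $\tilde x$. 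I expect the crux to be showing that the "mass escaping to infinity in the coordinates" is killed on an $\F_s$-large set: boundedness in $\ell_1$ together with statistical convergence of each coordinate must force $\F_s\text{-}\lim \sum_{k>m} |e_k^*(x_n)|$ to be small for large $m$, which is exactly the uniform tail integrability that lets $x(f) = \sum_k f_k \xi_k$ and places $x$ in $\ell_1$. Once this tail control is established, the conclusion $x = \tilde x \in \ell_1$ follows, completing the proof.
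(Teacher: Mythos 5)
Your first half is fine and matches the paper: the paper also rules out the block-respecting property by exhibiting a blocking into intervals of growing length (it uses $D_n=(2^n-1,2^{n+1}]$) and observing that any selector has density zero, then quotes Theorem~\ref{simple-thm}.

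The second half has a genuine gap, and in fact the step you identify as the crux is false. You want to deduce, from boundedness in $\ell_1$ plus weak* $\F_s$-convergence, that the tails $\sum_{k>m}|e_k^*(x_n)|$ become small along an $\F_s$-large set of indices $n$ for large $m$. This fails for $\F_s$: the Dvoretzky-type sequence built in the ``only if'' part of Theorem~\ref{simple-thm} (applied to the dyadic blocking above) is bounded, \emph{weakly} $\F_s$-convergent to $0$ --- hence weak* $\F_s$-convergent to $0$, with all $\xi_k=0$ --- yet for every fixed $m$ the set $\{n:\sum_{k>m}|e_k^*(x_n)|\ge 1\}$ contains all $n$ beyond finitely many blocks and so has density $1$. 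So there is no uniform tail control along $\F_s$, and the identification $x^{**}(f)=\sum_k f_k\xi_k$ cannot be obtained this way. (It is precisely because $\F_s$ is not block-respecting that the usual sliding-hump route to such tail estimates, as used for p-filters in Theorem~\ref{paper-thm}, is unavailable here.) The paper sidesteps all of this with the Ces\`aro characterization you quoted, but used in the opposite direction: for each $f\in\ell_1^*$ the bounded scalar sequence $f(x_n)$ is statistically convergent to $x^{**}(f)$, hence strongly Ces\`aro-summable to it, so the ordinary sequence of Ces\`aro means $y_n=\frac1n\sum_{j\le n}x_j\in\ell_1$ satisfies $f(y_n)\to x^{**}(f)$ for every $f$; thus $(y_n)$ is weakly Cauchy in $\ell_1$ and the classical weak sequential completeness of $\ell_1$ gives $x^{**}\in\ell_1$. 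You should replace your tail-control argument by this reduction.
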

\begin{proof}
Consider the blocking of $\N$ into $D_n = (2^n -1, 2^{n+1}]$.
Every set $J \subset \N$ intersecting each of $D_n$ by no more
than one element, has zero natural density and consequently cannot
be $\F_s$-stationary. Hence $\F_s$ is not block-respecting and by
the Theorem \ref{simple-thm} $\F_s$ does not have the simplified
Schur property.

Let us show now the weak $\ell_1$ completeness of $\F_s$. Let
$(x_n) \subset \ell_1$ be a bounded sequence and let weak*
$\F_s$-limit of $(x_n)$ be equal to an $x^{**} \in \ell_1^{**}$.
This means that for every $f\in \ell_1^{*}$
$$
\lim_{n \to \infty}\frac{1}{n}\sum_{j=1}^n|f(x^{**}-x_j)|=0.
$$
Hence the vectors $\frac{1}{n}\sum_{j=1}^n x_j$ weakly* converge
to $x^{**}$ as $n \to \infty$. By the ordinary weak sequential
completeness of $\ell_1$ this means that $x^{**} \in \ell_1$.
\end{proof}

Our next goal is to show that if one avoids ultrafilters in a
reasonable sense, then the same sufficient condition which we have
for the Schur property works for the WC1 as well.

\begin{definition} \label{def3.2}
A filter $\F$ on $\N$ is said to be a {\it paper filter} ({\it
p-filter}) if all the traces of $\F$ on $\F$-stationary subsets
are not ultrafilters.
\end{definition}

\begin{theorem}\label{paper-thm}
If a p-filter $\F$ on $\N$ is diagonal and is block-respecting
then $\F$ is a WC1-filter.
\end{theorem}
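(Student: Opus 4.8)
The plan is to show that the weak$^*$ $\F$-limit $x^{**}$ of a bounded sequence $(x_n)\subset\ell_1$ has vanishing singular part, by using the p-filter hypothesis to manufacture a test functional against which no weak$^*$ $\F$-limit can exist unless that singular part is zero. First I would isolate the candidate $\ell_1$-limit. Since $x^{**}\in\ell_1^{**}=\ell_\infty^*$, its restriction to $c_0$ is an element $y=(y_k)\in c_0^*=\ell_1$, where $y_k=x^{**}(e_k^*)=\F\text{-}\lim e_k^*(x_n)$. Put $w_n=x_n-y$; this is a bounded sequence in $\ell_1$ that weak$^*$ $\F$-converges to the singular part $s:=x^{**}-y$ and, by construction, $\F$-converges to $0$ coordinate-wise. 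The theorem reduces to proving $s=0$. Suppose not: since $\F\text{-}\lim\|w_n\|=0$ would force $s=0$, Theorem~\ref{stationary-thm} yields an $I\in\F^*$ and an $\eps>0$ with $\inf_{n\in I}\|w_n\|>\eps$.

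Next I would extract an $\ell_1$-basis structure on a stationary set, reusing the machinery already built for the Schur property. By Lemma~\ref{diag-coord} (diagonality) there is a $J_0\in\F^*$, $J_0\subset I$, along which $(w_n)$ genuinely converges coordinate-wise to $0$, and the trace filter $\F(J_0)$ is block-respecting by Remark~\ref{rem1.1}. Padding $(w_n)$ by zeros off $J_0$ and applying Lemma~\ref{simple-lemma} to $\F(J_0)$, I obtain an $\F$-stationary $J\subset J_0$ such that $(w_n)_{n\in J}$ is equivalent to the canonical $\ell_1$-basis. In particular, for every bounded scalar sequence prescribed on the terms indexed by $J$ there is an $f\in\ell_\infty=\ell_1^*$ realizing those values on the $w_n$, obtained by defining the functional on the closed span and extending it via Hahn--Banach.

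The decisive step, where the p-filter hypothesis enters, is to split $J$ and build an oscillating functional. Because $\F$ is a p-filter, the trace of $\F$ on $J$ is not an ultrafilter, so by the trichotomy for $\F$-stationary sets recorded in the Preliminaries we may write $J=J^{+}\sqcup J^{-}$ with both $J^{+},J^{-}\in\F^*$. Using the equivalence to the $\ell_1$-basis, I choose $f\in\ell_\infty$ with $f(w_n)=+1$ for $n\in J^{+}$ and $f(w_n)=-1$ for $n\in J^{-}$. Then $(f(w_n))_{n\in J^{+}}$ has unique cluster value $1$ while $(f(w_n))_{n\in J^{-}}$ has unique cluster value $-1$, so by Theorem~\ref{stationary-thm} the quantity $\F\text{-}\lim f(w_n)$ cannot exist. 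Since weak$^*$ $\F$-convergence of $(w_n)$ forces this limit to exist and equal $s(f)$, this is a contradiction; hence $s=0$ and $x^{**}=y\in\ell_1$.

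I expect the main obstacle to be recognizing and exploiting precisely the p-filter hypothesis at the splitting step. The block-respecting and diagonality assumptions only deliver the $\ell_1$-basic block (they are exactly what make the Schur-type lemmas available) and cannot by themselves rule out a nonzero singular limit; indeed an ultrafilter is diagonal and block-respecting yet fails WC1 by Theorem~\ref{ultra-WC1}. It is precisely the failure of the traces to be ultrafilters that guarantees a two-sided stationary splitting of $J$, and hence the oscillation that annihilates the singular part.
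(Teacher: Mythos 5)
Your proposal is correct and follows essentially the same route as the paper's proof: subtract the coordinatewise limit (the restriction of $x^{**}$ to $c_0$), use diagonality via Lemma~\ref{diag-coord} and block-respect via Remark~\ref{rem1.1} and Lemma~\ref{simple-lemma} to extract a stationary $J$ on which the sequence is equivalent to the canonical $\ell_1$-basis, and then use the p-filter hypothesis to split $J$ into two stationary pieces and build a $\pm 1$-oscillating functional that contradicts weak$^*$ $\F$-convergence. The only differences are cosmetic (you phrase the final contradiction via cluster points and Theorem~\ref{stationary-thm}, while the paper writes the two trace-filter limits directly).
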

\begin{proof}
Let $(x_n) \subset \ell_1$ be a bounded sequence and let
$\F$-limit of $(x_n)$ in the topology $\sigma(\ell_1^{**},
\ell_1^{*})$ be equal to an $x^{**} \in \ell_1^{**} \setminus
\ell_1$. Consider the standard projection $P:\ell_1^{**} \to
\ell_1$, which maps every element of $\ell_1^{**}$ (i.e. a linear
functional on $\ell_\infty$) into its restriction on $c_0$. Denote
$x = P x^{**}$. Without loss of generality we may assume that $x =
0$: otherwise consider $x_n - x$ instead of $x_n$. This assumption
means that $x_n$ coordinate-wise converge to 0 with respect to the
filter $\F$. Due to the Lemma \ref{diag-coord} there is a $I \in
\F^*$, such that $x_n$ coordinate-wise converge to 0 along  $I$.
Since $\F(I)$ is block-respecting (Remark \ref{rem1.1}), we may
apply Lemma \ref{simple-lemma} to get such a $J \in \F^*$ $J
\subset I$, that the sequence $(x_n)$, $n \in J$ is equivalent to
the canonical basis of $\ell_1$ (here we use also the boundedness
of the sequence). Since $\F(J)$ is not an ultrafilter we can
decompose $J$ into two disjoint $\F$-stationary subsets $J_1$ and
$J_2$. Consider a functional $x^*\in \ell_1^{*}$ which takes value
1 on all $x_n$, $n \in J_1$ and is equal to $-1$ on every  $x_n$,
$n \in J_2$. Then
$$
1 = \lim_{\F(J_1)} x^*(x_n) = x^*(x^{**}) = \lim_{\F(J_2)} x^*(x_n) = -1.
$$
This contradiction completes the proof.
\end{proof}

To proceed further let us introduce the sum and the product of
filters.
\begin{definition}\label{def5.1}
Let $\F_1$, $\F_2$ be filters on $N_1$ and $N_2$ respectively.
Define $\F_1+\F_2$ as the filter on $N_1 \cup N_2$ consisting of
those elements $A \subset N_1 \cup N_2$ that $A \cap N_1 \in \F_1$
and $A \cap N_2 \in \F_2$. The filter $\F_1 \times \F_2$ is
defined on $N_1 \times N_2$ with base formed by the sets $A_1
\times A_2$, $A_1 \in \F_1$, $A_2 \in \F_2$.
\end{definition}
\begin{definition} \label{def5.2}
A filter $\F$ on $N$ is said to have the {\it double Schur
property } if $\F \times \F$ is a Schur filter.
\end{definition}
\begin{theorem}\label{double Schur thm}
Every filter $\F$ with the double Schur property is a WC1-filter
and a Schur filter at the same time.
\end{theorem}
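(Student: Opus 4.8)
The plan is to derive both conclusions from the single hypothesis that $\F\times\F$ is a Schur filter, by lifting each one-parameter problem on $N$ to a two-parameter problem on $N\times N$ whose weak $\F\times\F$-convergence can be checked by hand, and then pushing the resulting norm statement back down to $N$. Throughout I will use the elementary fact that a set of the form $B\times N$ belongs to $\F\times\F$ if and only if $B\in\F$: if $A_1\times A_2\subset B\times N$ with $A_1,A_2\in\F$, then, $A_2$ being nonempty, $A_1\subset B$, so $B\in\F$; the converse is clear since $N\in\F$.

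For the Schur property of $\F$, I take $(x_n)_{n\in N}\subset\ell_1$ weakly $\F$-convergent to $0$ and consider the sequence $z_{(m,n)}=x_m$ on $N\times N$. For each $f\in\ell_1^*$ one has $f(z_{(m,n)})=f(x_m)$, so $\{(m,n):|f(z_{(m,n)})|<\eps\}=A\times N$ with $A=\{m:|f(x_m)|<\eps\}\in\F$; hence $(z_{(m,n)})$ is weakly $\F\times\F$-convergent to $0$. By the double Schur property $\|z_{(m,n)}\|=\|x_m\|$ tends to $0$ along $\F\times\F$, i.e. $\{m:\|x_m\|<\eps\}\times N\in\F\times\F$ for every $\eps>0$. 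By the equivalence above this gives $\{m:\|x_m\|<\eps\}\in\F$, so $\F\text{-}\lim\|x_m\|=0$ and $\F$ is a Schur filter.

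For the WC1 property, let $(x_n)\subset\ell_1$ be bounded and weak* $\F$-convergent to some $x^{**}\in\ell_1^{**}$, and put $w_{(m,n)}=x_m-x_n$ on $N\times N$. For $f\in\ell_1^*=\ell_\infty$ we have $f(x_m)\to_\F x^{**}(f)$ and $f(x_n)\to_\F x^{**}(f)$, so $f(w_{(m,n)})=f(x_m)-f(x_n)\to_{\F\times\F}0$; thus $(w_{(m,n)})$, whose terms lie in $\ell_1$, is genuinely weakly $\F\times\F$-convergent to $0$. Applying the double Schur property yields $\|x_m-x_n\|\to_{\F\times\F}0$, and restricting a base set $A\times B$ to the filter element $A\cap B\in\F$ gives, for every $\eps>0$, a $C_\eps\in\F$ with $\operatorname{diam}\{x_n:n\in C_\eps\}\le\eps$ in norm; intersecting finitely many of these I obtain a decreasing sequence $C_k\in\F$ of diameter $\le 2^{-k}$. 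This is an $\F$-Cauchy condition, and since $\ell_1$ is complete I will produce from it a norm $\F$-limit $x\in\ell_1$, whence norm (and a fortiori weak*) $\F$-convergence of $(x_n)$ to $x$ forces $x^{**}=x\in\ell_1$ by uniqueness of limits, proving that $\F$ is a WC1-filter.

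The routine part is the verification of the two weak $\F\times\F$-convergence claims; the one step that needs genuine care is converting the $\F\times\F$-Cauchy condition into an honest norm $\F$-limit. This is a completeness-and-bookkeeping argument: one picks representatives $n_k\in C_k$, checks that $(x_{n_k})$ is norm-Cauchy and hence converges to some $x\in\ell_1$, and then estimates $\|x_n-x\|$ for $n\in C_k$ to conclude that $\{n:\|x_n-x\|<3\cdot 2^{-k}\}\supset C_k\in\F$. I expect this diagonal extraction, rather than the lifting itself, to be the main thing to get right.
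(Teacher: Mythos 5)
Your proof is correct and follows essentially the same route as the paper: the WC1 part via the double sequence $x_m-x_n$ and the double Schur property, with your $C_k$/representative bookkeeping simply filling in what the paper dismisses as ``due to the completeness of $\ell_1$''. Your explicit lifting $z_{(m,n)}=x_m$ for the Schur part (using that $B\times N\in\F\times\F$ iff $B\in\F$) is the natural argument the paper leaves implicit.
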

\begin{proof}
Consider such a bounded sequence $(x_n) \subset \ell_1$ that
$\F$-limit of $(x_n)$ in the topology $\sigma(\ell_1^{**},
\ell_1^{*})$ is equal to an $x^{**} \in \ell_1^{**}$. Then the
double sequence $(x_n - x_m)$ is weakly $\F \times \F$-convergent
to 0. According to the double Schur property of $\F$ this implies
that $\|x_n - x_m\| \to_{\F \times \F} 0$, i.e. (due to the
completeness of $\ell_1$) there is an element $x \in \ell_1$ such
that $\|x_n - x\| \to_{\F} 0$. Evidently $x^{**} = x \in \ell_1$.
\end{proof}

\section{Domination by Schur and WC1 filters. Open problems}

\begin{definition} \label{defincrease}
A property $\mathcal P$ of filters (or corresponding class of
filters) is said to be {\it quasi-increasing} if for every $\F \in
\mathcal P$ all the filters of the form $\F(J)$ for every $J \in
\F^*$ have the property $\mathcal P$ as well.
\end{definition}

\begin{remark} \label{remshurtrase}
$\F(J)$-convergence to 0 (in arbitrary fixed topology) of a
sequence $(x_n)$ is equivalent to $\F$-convergence to 0 in the
same topology of the sequence $(x_n \chi_J(n))$. Consequently the
properties defined only through convergence to 0 (like Schur or
double Schur properties) are quasi-increasing.
\end{remark}

\begin{definition} \label{defmono}
A property $\mathcal P$ of filters is said to be {\it decreasing}
if for every $\F \in \mathcal P$ all the filters dominated by $\F$
have the property $\mathcal P$ as well.
\end{definition}

Evidently WC1 filters form a decreasing class. So one can improve
the Theorem \ref{double Schur thm} as follows: every filter
dominated by a filter with the double Schur property is a
WC1-filter. This is an improvement, because of the following
proposition:
\begin{theorem} \label{not monotone}
The Schur property, the double Schur property and moreover every
non-trivial quasi-increasing property $\mathcal P$ of filters are
not decreasing.
\end{theorem}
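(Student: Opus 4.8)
The plan is to prove the statement in contrapositive form: a quasi-increasing property $\mathcal{P}$ that is moreover decreasing must be trivial. Concretely, assuming $\mathcal{P}$ is non-trivial I would produce one filter $\F'\in\mathcal{P}$ dominating a strictly coarser filter that lies outside $\mathcal{P}$. Non-triviality furnishes a witness $\F\in\mathcal{P}$ and a witness $\mathcal{H}\notin\mathcal{P}$; I place them on two disjoint copies $M_1,M_2$ of $\N$, with $\mathcal{H}$ living on $M_1$ and $\F$ on $M_2$, and work on $M=M_1\sqcup M_2$.

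The central object is the sum $\mathcal{G}=\mathcal{H}+\F$ from Definition \ref{def5.1}. First I would check that $\mathcal{G}$ is free and that both $M_1$ and $M_2$ are $\mathcal{G}$-stationary, and then identify the two relevant refinements: the trace of $\mathcal{G}$ on $M_2$ generates exactly the trivial extension $\F'=\mathcal{G}(M_2)=\{B\subseteq M:\ B\cap M_2\in\F\}$, and similarly $\mathcal{G}(M_1)=\{B\subseteq M:\ B\cap M_1\in\mathcal{H}\}$. Reading off the defining intersections one sees immediately that $\mathcal{G}\subseteq\F'$, so $\mathcal{G}$ is dominated by $\F'$. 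The whole claim therefore reduces to the two membership assertions $\F'\in\mathcal{P}$ and $\mathcal{G}\notin\mathcal{P}$.

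To obtain the two memberships I would isolate the lemma that a trivial extension does not change the property, which is exactly the elementary remark underlying Remark \ref{remshurtrase}: for any filter $\mathcal{E}$ on one block, a sequence $(x_n)_{n\in M}$ converges to $0$ (in any fixed topology) along the trivial extension of $\mathcal{E}$ if and only if its restriction to that block converges to $0$ along $\mathcal{E}$, since the coordinates indexed by the other block are entirely unconstrained and hence irrelevant. Consequently, for a property described purely through convergence to $0$ — in particular the Schur and the double Schur properties — passing to a trivial extension neither creates nor destroys the property. Applying this once with $\mathcal{E}=\F$ gives $\F'\in\mathcal{P}$, and once with $\mathcal{E}=\mathcal{H}$ gives $\mathcal{G}(M_1)\notin\mathcal{P}$.

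Finally I would convert $\mathcal{G}(M_1)\notin\mathcal{P}$ into $\mathcal{G}\notin\mathcal{P}$ using quasi-increasingness in its contrapositive form: since $M_1\in\mathcal{G}^{*}$, if $\mathcal{G}$ belonged to $\mathcal{P}$ then Definition \ref{defincrease} would force $\mathcal{G}(M_1)\in\mathcal{P}$, a contradiction. Together with $\mathcal{G}\subseteq\F'\in\mathcal{P}$ this exhibits $\mathcal{P}$ as not decreasing. The Schur and double Schur properties are quasi-increasing by Remark \ref{remshurtrase} and non-trivial — for instance $\F_s$ is not even a simple Schur filter by Theorem \ref{stat-WC1}, and it is therefore neither Schur nor double Schur — so both are covered. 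The step I expect to be the genuine obstacle is precisely the trivial-extension lemma for a completely arbitrary quasi-increasing property: outside the convergence-to-$0$ setting one would have to re-establish invariance under trivial extensions by hand, for instance by recognizing $\F'$ as $\widehat{\F}(J)$ for a suitable $\widehat{\F}\in\mathcal{P}$ and $J\in\widehat{\F}^{*}$ via a self-reproduction/isomorphism argument, whereas for every property we actually care about the lemma is immediate from the convergence description.
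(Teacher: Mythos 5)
Your proposal is correct and follows essentially the same route as the paper: form the sum $\mathcal{H}+\F$ of a filter outside $\Pe$ and one inside $\Pe$, use quasi-increasingness on the stationary block carrying $\mathcal{H}$ to conclude the sum is not in $\Pe$, and observe that the trivial extension of $\F$ dominates the sum and is in $\Pe$. The only difference is that you make explicit the identification of a filter with its trivial extension (via Remark~\ref{remshurtrase} or an isomorphism-invariance assumption), a point the paper's proof passes over silently.
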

\begin{proof}
Let $\F_1 \in \Pe$, $\F_2 \not\in \Pe$ be filters on $N_1$ and
$N_2$ respectively. Then $\F=\F_1+\F_2$ is a filter on $N_1 \cup
N_2$  which cannot have the property $\Pe$, because $\F(N_2)
\not\in \Pe$. On the other hand $\F(N_1) \in \Pe$ but $\F(N_1)$
dominates $\F$.
\end{proof}

One can introduce a bit weaker but still reasonable version of the
Schur property, which is decreasing:

\begin{definition} \label{defalmostSchur}
A filter $\F$ on $N$ is said to be an {\it almost Schur filter}
(or is said to have the {\it almost Schur property}) if for every
weakly $\F$-convergent to 0 sequence $(x_n) \subset \ell_1$, $n
\in N$ the norms of $x_n$ are not separated from 0 (or in other
words 0 is a cluster point for $\|x_n\|$, $n \in N$).
\end{definition}

Theorem \ref{stationary-thm} easily implies that a filter $\F$ on
$N$ has the Schur property \ifff all the filters $\F(J)$ for every
$J \in \F^*$ are almost Schur filter.

One can also introduce increasing properties:
\begin{definition} \label{defmono+}
A property $\mathcal P$ of filters is said to be {\it increasing}
if for every $\F \in \mathcal P$ all the filters that dominate
$\F$ have the property $\mathcal P$ as well.
\end{definition}

Evidently the negation of an increasing property is a decreasing
one and contra versa.

\vspace{5 mm}

\begin{definition} \label{defmonobase}
Let $\Pe$ be an increasing (decreasing) class of filters. A class
of filters $\Pe_1 \subset \Pe$ is said to be a {\it basis} for
$\Pe$ if $\Pe$ is the smallest increasing (decreasing) class,
containing $\Pe_1$.
\end{definition}

The problem which looks interesting is to construct explicitly a
class of almost Schur filters, which forms a base for the class of
all almost Schur filters. The same question makes sense for the
negation of property to be almost Schur filter. Such a study was
done in \cite{gakad} for the class of those filters $\F$, that
weak $\F$-convergence of a sequence implies existence of a bounded
subsequence.

\end{document}